\newtheorem{theorem}{Theorem}
\newtheorem*{theorem*}{Theorem}
\newtheorem{lemma}[theorem]{Lemma}
\newtheorem{conjecture}[theorem]{Conjecture}
\newtheorem*{conjecture*}{Conjecture}
\newtheorem{corollary}{Corollary}[theorem]
\providecommand{\keywords}[1]
{
  \small	
  \textbf{\textit{Keywords---}} #1
}
\newcounter{milp}
\renewcommand{\themilp}{\roman{milp}}  % (A, B, C, ...) labelling
\newenvironment{milp}
  {\refstepcounter{milp}\begin{equation}\tag{\themilp}}
  {\end{equation}}
\theoremstyle{definition}
\newtheorem{defn}{Definition}[section]
\DeclareMathOperator*{\argmin}{arg\,min}
\newcommand{\1}{\mathbbm{1}}
\newcommand{\indic}[1]{\1_{\{#1\}}}
\newcommand{\reals}{\mathbb{R}}
\newcommand{\Hset}{\mathcal{H}}
\newcommand{\vect}[1]{\textbf{#1}}
\newcommand{\blank}[1]{}
\newcommand{\meshalgo}{\texttt{MeshItUp}}
\newcommand{\dspace}{\,\,}
\newcommand{\greenbox}[1]{\color{green}\boxed{\color{black}{#1}}\color{black}}
\newcommand{\redbox}[1]{\color{red}\boxed{\color{black}{#1}}\color{black}}
\newcommand{\subsubsubsection}[1]{\textit{#1}.}
\newcommand{\dn}{\Delta_n}
\newcommand{\dnr}[1]{\Delta_{#1}}
\newcommand{\dnR}{\dnr{n,R}}
\newcommand{\alphaval}{(1/2)^n(4/5)}
\title{All In: Give me your money!}
\author{Angel Y. He\footnote{Department of Computer Science, The University of Oxford\\
email:  \texttt{angel.he@balliol.ox.ac.uk}}
\dspace and Mark Holmes\footnote{School of Mathematics and Statistics, The University of Melbourne\\
email:  \texttt{holmes.m@unimelb.edu.au}}}
\date{}
\begin{document}
\maketitle

\begin{abstract}
We present a computer assisted proof for a result concerning a three player betting game, introduced by Angel and Holmes.  The three players start with initial capital $x,y,z>0$ respectively.  At each step of this game two players are selected at random to bet on the outcome of a fair coin toss, with the size of the bet being the largest possible, namely the total capital held by the poorer of the two players at that time.  The main quantity of interest is the probability of player 1 being eliminated (reaching $0$ capital) first. Angel and Holmes have shown that this probability is not monotone decreasing as a function of the initial capital $x$ of player 1.  They conjecture that if $x<y<z$ then player 1 would be better off (less likely to be eliminated first) by swapping their capital with another player. 

In this paper we present a computer-assisted proof of this conjecture.  To achieve this, we introduce the theoretical framework \meshalgo, and then perform a two-stage reduction to make $\meshalgo$ computationally feasible, through the use of mixed-integer programming. 
%In addition to proving the central theorem of interest, we extend this computer-aided proof framework to all theorems of a specific structure.
\end{abstract}

\keywords{Gambler's ruin, Markov chain, Constrained optimization, Mixed-integer programming}

%% COMPLETION BY MARK
\section{Introduction}
Consider a tournament consisting of individual players in which: (i) players exchange money and are eliminated one by one when their capital reaches \$0; and (ii) players receive prize money based on their position in the finishing order.  Suppose that at some point the non-eliminated players decide to share their prize money before the winner has been determined (e.g.~when there are still 3 players left in the game).  How should the prize be split?   The answer should depend on the likelihood of various finishing orders, given the current capital that each of the non-eliminated players has.  

This problem (in the context of poker tournaments) motivated Diaconis and coauthors \cite{diaconis2021gambler,diaconis2022gambler}
to study a multi-player version of the gambler's ruin problem that we present here for just 3 players as follows.  Starting with initial \$ amounts $x,y,z\in \mathbb{N}$, at each step of the process two individuals are chosen uniformly at random to make a \$1 bet on the outcome of a fair coin toss.  The ``fair coin toss'' can be thought of as a manifestation of the players being of equal ability.  A player is eliminated when their capital (typically called a \emph{stack size} in this poker setting) reaches \$0.  The first player eliminated is called the loser, and the player with all the money at the end is declared the winner.  A simple ``Martingale''\footnote{A martingale is a fair game.} argument shows that the probability that player 1 is the winner of this game is $x/(x+y+z)$.  The probability that player 1 is the loser is not an elementary function of $x,y,z$. 

Angel and Holmes \cite{angel2024betting} have introduced a variant of the game above where \$1 bets are replaced with maximal bets: if the two players who go head to head for this round have stack sizes $0<a\le b$ then the bet is for \$$a$.  Such a situation is sometimes referred to as a ``heads up'' (i.e.,~involving only two of the players) ``all in'' (i.e.,~one of the players is betting all of their money) bet in poker.  In this model stack sizes  and bet sizes need not be integers, and the model is scale invariant in that multiplying all of the stack sizes by $c>0$ does not affect the game.   Using a simple martingale argument, it is still the case that 
the probability of player 1 winning is  $x/(x+y+z)$ when the initial stack sizes are $x$ (for player 1), $y$, and $z$ respectively.  Angel and Holmes have shown that the probability $f(x,y,z)$ of player 1 losing is not a decreasing function of $x$ (in the case of \$1 bets above the corresponding function is decreasing in $x$).  Indeed there are situations where if player 1 wants to reduce their probability of being eliminated first then they could achieve this by throwing away some of their stack or giving some to another player.  They conjecture however that a weaker type of monotonicity holds, namely that if $x<y<z$ then to reduce their probability of losing, player 1 would be better off swapping their stack with that of one of the other players.  A somewhat imprecise phrase which captures the distinction between the two statements is the following:
\begin{quotation}
 \emph{  You are not necessarily better off starting with more money, but you are better off starting with more money than your opponent.} 
\end{quotation}

The main contribution of this paper is to present a computer-assisted proof of the aforementioned  conjecture.

%Diaconis and coauthors 
%We study the real-valued version of the ``All in'' model first proposed in \cite{diaconis2022gambler}: Recall the following betting game involving 3 players, starting with stack sizes $x,y,z$.  For each round of the game, two players are picked uniformly at random to bet on the outcome of a fair coin toss.  The size of the bet is equal to the minimum of their two current stacks.  As soon as any player reaches \$0 they are declared the loser.  Let $f(x,y,z)$ be the probability that player 1 is the loser if the starting stacks are $x,y,z$. 

%% ...
%% Motivation, impacts...
%% Why is it important, why are we studying this?

%\subsubsubsection{Our results}
%We seek to better understand the effect of the players' initial capital (stack size) on their chance of \textit{losing}, i.e., being the first to be eliminated from the game. Our main result approximately says the following:

\begin{theorem}
\label{thm:want_to_swap}
If $0<x<y<z$ then $f(x,y,z)>f(y,x,z)>f(z,x,y)$.  In particular,  
 player 1 is less likely to lose if they swap their initial stack with that of another player who has a larger initial stack.
\end{theorem}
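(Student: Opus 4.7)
The plan is to derive a functional equation for $f$ from the Markov structure of the game, then prove the two strict inequalities via a computer-assisted verification on a finite mesh of the state space. Conditioning on which of the three pairs is selected and on the coin toss outcome, $f$ satisfies a one-step recursion in which each of six equally weighted terms is either $0$, $1$, or $f$ evaluated at a deterministic next state. Since the game is scale invariant, I restrict attention to the open $2$-simplex $\Delta = \{(x,y,z):x,y,z>0,\, x+y+z=1\}$ and rescale next states back onto $\Delta$; this makes $f$ the unique bounded solution of a self-referential equation on $\Delta$.

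Next I would decompose $\Delta$ into a finite family of cells and associate to each cell $C$ a candidate pair $(\ell_C, u_C)$ of lower and upper bounds for $f$ on $C$. Applying the one-step recursion at any $(x,y,z)\in C$ and replacing each $f$-value on the right-hand side by the corresponding $\ell$ or $u$ at the cell containing the appropriate image yields necessary inequalities that any valid bounds must satisfy; these inequalities are linear once the image cells are identified. The \meshalgo{} framework formalises this by iterating the tightening step until the bound-constraints are self-consistent, thereby producing rigorous cell-wise enclosures of $f$.

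To prove the theorem it then suffices to exhibit cell-wise bounds from which $f(x,y,z) > f(y,x,z)$ and $f(y,x,z) > f(z,x,y)$ can be read off on the sub-simplex $\{0<x<y<z\}$. I would encode the combined system, namely the bound-tightening constraints, the membership indicators for image cells, and the desired strict inequalities between bounds at cells related by the appropriate coordinate swap, as a mixed-integer linear program with binary variables tracking image-cell membership and continuous variables for the $\ell_C$ and $u_C$. A feasible solution is a rigorous certificate of the theorem.

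The main obstacle will be the geometric degeneracies of the one-step map near the loci $x=y$, $y=z$, $x=z$ and near the boundary where a coordinate approaches $0$: there $f$ can fail to be smooth, the image of a single mesh cell can straddle many other cells, and the bound tightening stalls. A naive uniform mesh then gives an intractable MILP. The two-stage reduction promised in the abstract presumably addresses exactly this: first aggregating cells far from the singular loci so that the recursion behaves tamely, and then refining adaptively near the singularities while exploiting the coordinate-permutation symmetries of $f$ to cut down the number of independent variables to a size a MILP solver can handle.
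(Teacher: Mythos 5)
Your plan replaces the paper's argument with a cell-wise enclosure of $f$ itself: lower/upper bounds $\ell_C,u_C$ tightened through the one-step recursion, with the decisive claim that a \emph{feasible} point of your MILP is a rigorous certificate of the theorem. That step fails as stated. Binary ``image-cell membership'' variables in a MILP assert properties of \emph{some} point of a cell, whereas the validity of constraints of the form $\ell_C \le \tfrac{1}{6}\sum(\cdots)$ and $u_C \ge \tfrac{1}{6}\sum(\cdots)$ as bounds on $f$ requires them to hold for \emph{every} point of $C$, where both the image cell of each of the six transitions and the classification of which transitions end the game (the comparisons $x\le y$, $x\le z$, etc.) can vary across $C$. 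Unless the mesh is constructed so that each cell maps into a single cell under each of the six piecewise-linear maps and all relevant comparisons have constant truth value on each cell, feasibility of your MILP proves nothing about $f$; building such a dynamics-compatible mesh is precisely the hard part, not a side issue about degeneracies near $x=y$. You also lean on two unproved ingredients: that $f$ is the unique bounded solution of the one-step equation (this needs almost-sure termination, e.g.\ the fact that at least three of the six transitions from any state end the game, so the continue-probability is at most $1/2$), and that your tightening loop ever produces enclosures sharp enough to separate $f(x,y,z)$ from $f(y,x,z)$ uniformly --- you give no quantitative reason it would terminate.

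The paper takes a different, and simpler, route: it never encloses $f$. With $h_j$ the probability that player 1 is eliminated exactly at round $j$, it proves $f(x_1,y_1,z_1)-f(x_2,y_2,z_2)\ge \sum_{j=1}^{n}\bigl(h_j(x_1,y_1,z_1)-h_j(x_2,y_2,z_2)\bigr)-(1/2)^n(4/5)$, using $f\le 4/5$ and the $(1/2)^n$ bound on surviving $n$ rounds. Unrolling the recursion for the $h_j$ makes $\Delta_4$ an explicitly piecewise-constant function, and the mesh is by construction the arrangement of the linear comparisons appearing in the unrolled indicators, so a pointwise search is equivalent to a region-wise check. The MILP is then used in the opposite direction from yours: one searches for a counterexample point with $\Delta_4\le (1/2)^4(4/5)$, and the solver's infeasibility/lower-bound guarantee certifies $\Delta_4(x,y,z)\ge 43/648>1/20$ on $\{0<x<y<z\}$ (and $\Delta_4(y,z,x)\ge 37/648$ for the second inequality), with strict inequalities converted to non-strict ones via scale invariance. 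To salvage your route you would need to prove the contraction/uniqueness lemma, generate the cells from the same hyperplane arrangement the dynamics induces, and replace ``feasibility certifies'' by ``infeasibility or a certified solver lower bound certifies'' a universally quantified statement --- at which point you have essentially reconstructed the paper's argument.
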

Note that for any $x,y,z$, $f(x,y,z)=f(x,z,y)$ as by symmetry it makes no difference to player 1 which of the two other players has $y$ and which has $z$.

As the game progresses, the stack sizes evolve as a discrete-time (time-homogeneous) Markov chain taking values in the 3-simplex, because the sum of the three stack sizes is preserved throughout the game\footnote{That is, the 3-simplex is defined by $\{(x,y,z): x,y,z>0, x+y+z=1\}$, and we specifically focus on its subset where $x<y<z$. Since the game is scale invariant, we can use this normalized representation of stack sizes instead of constraining their absolute values.}.  From each state with strictly positive stack sizes there are 6 possible transitions, corresponding to the 6 possibilities for which player wins money from which player in the next round.    Figure \ref{fig:dtmc} shows the 1-step transition probabilities from a state $(x,y,z)$ with $x\le y\le z$.

\begin{figure}[H]
    \centering
    \resizebox{1\textwidth}{!}{ %%%% DTMC
\begin{tikzpicture}[
    every node/.style={draw, ellipse, minimum width=1cm, minimum height=0.5cm, font=\Large},
    every edge/.append style={->, -{Latex[length=2mm,width=5mm]}, thick, line width=1pt}
]

% Main node (x,y,z)
\node (xyz) at (0,0) {\(x,y,z\)};

% Other nodes evenly spread below
\node (1) at (-10,-3) {\(0,y+x,z\)};
\node (2) at (-6,-3) {\(2x,y-x,z\)};
\node (3) at (-2,-3) {\(0,y,z+x\)};
\node (4) at (2,-3) {\(2x,y,z-x\)};
\node (5) at (6,-3) {\(x,0,z+y\)};
\node (6) at (10,-3) {\(x,2y,z-y\)};

% Edges from (x,y,z) to nodes 1-6, with labels 1/6
\draw[->] (xyz) edge node[right, draw=none, above=1pt] {\Large \( \frac{1}{6}\)} (1);
\draw[->] (xyz) edge node[left,draw=none, left=5pt] {\Large \(\frac{1}{6}\)} (2);
\draw[->] (xyz) edge node[left, draw=none] {\Large \(\frac{1}{6}\)} (3);
\draw[->] (xyz) edge node[right, draw=none] {\Large \(\frac{1}{6}\)} (4);
\draw[->] (xyz) edge node[right, draw=none, right=7pt] {\Large \(\frac{1}{6}\)} (5);
\draw[->] (xyz) edge node[right, draw=none, above=1pt] {\Large \(\frac{1}{6}\)} (6);

\end{tikzpicture} }
    \caption{Transition probabilities from a state $(x,y,z)$ with $0\le x\le y\le z$.}
    \label{fig:dtmc}
\end{figure}
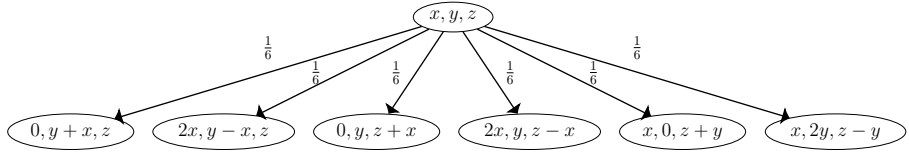
For some transitions from this state a player reaches 0, so the loser has been determined and the game ends.  For others it can be that all three players still have strictly positive stack sizes, so the game continues.  Note that if $x=y=z$ then the game is guaranteed to end after just one round, since no matter which pair of players compete in the next round, the player who loses the bet reaches 0.  

\textit{Overview}. 
In Section \ref{sec:the_alg} we present some preliminary calculations that motivate the form of the algorithm \meshalgo, which is presented at the end of the section.  The
%We introduce  an
algorithm
%, 
%\meshalgo, which 
involves partitioning the 3-simplex into numerous regions, and checking whether a specific inequality %\eqref{eqn:mesh-ineq}
holds within each region.  While this algorithm is designed to theoretically establish Theorem \ref{thm:want_to_swap}, we need to adopt 
%it is computationally infeasible in its initial form. To address this, we propose 
a two-step reduction process to make it computationally feasible. Specifically, we reformulate the core problem as a constrained optimization problem, and then (by converting the strict inequality constraints into non-strict inequality constraints) transform it into a solvable mixed-integer linear program.  This is described in the majority of Section \ref{sec:mip}. 
The source code of our implementation\footnote{The implementation is entirely the work of the first author.} (in Python) for the computer-assisted proof is available at \url{\srccode}. In Section \ref{sec:adapt-algo} we provide a sketch of how this hybrid mathematical-computational framework could be adapted to prove a hypothetical inequality of a similar form to Theorem \ref{thm:want_to_swap}.

\blank{
\subsubsubsection{Overview}
We give an outline of the remainder of this work.
\begin{itemize}
    \item In Section \ref{sec:background}, we present some prior work and background concepts relevant to the proof.
    \item In Section \ref{sec:bigpic}, we present the big picture of the proof for Theorem \ref{thm:informal_want_to_swap}, including the \meshalgo, algorithm.
    \item In Section \ref{sec:mip}, we present the mixed-integer programming framework that enables the implementation of \meshalgo.
    \item In Section \ref{sec:results}, we present the main results based on our implementation, which explains the ``magic numbers'' in Section \ref{sec:bigpic}.
\end{itemize}
}

\section{Preliminary Calculations}
\label{sec:the_alg}
In this section we present some elementary calculations that establish the theoretical basis for the proof, and present the $\meshalgo$ algorithm.  The first result gives a uniform upper bound on the probability of player 1 being the loser.

\begin{lemma}
For any $x,y,z>0$, $f(x,y,z)\le \frac45$ and $f(x,z,z)\le \frac{2}{3}$.
\end{lemma}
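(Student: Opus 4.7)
The plan is to prove both inequalities via a one-step analysis of the six possible transitions from an arbitrary starting state, combined with a supremum/fixed-point argument.

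First, I would fix a state $(x,y,z)$ with $x,y,z>0$ and let $E_1$ count those one-step transitions that immediately eliminate player 1, and $N$ those that lead to a new state with all three stacks still strictly positive. Two combinatorial observations about the transitions in Figure \ref{fig:dtmc} (and its analogues for other orderings of the stacks) then suffice: (a) $E_1 \le 2$, because player 1 can only be eliminated when they lose the players-1-vs-2 bet (requiring $x \le y$) or the players-1-vs-3 bet (requiring $x \le z$); and (b) $N \le 3$, because for each of the three pairs of players the transition in which the (weakly) poorer of the pair loses always eliminates that player, giving at least three eliminating transitions in total.

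Setting $M := \sup_{x,y,z > 0} f(x,y,z)$ and applying the one-step Markov recursion, these bounds yield
\[
f(x,y,z) \le \frac{1}{6}\bigl(E_1 + N\,M\bigr) \le \frac{1}{6}\bigl(2 + 3M\bigr)
\]
for every state, hence $M \le 2/3$, which is strictly smaller than $4/5$ and so proves the first bound. For the sharper bound $f(x,z,z) \le 2/3$, I would invoke the permutation symmetry between players 2 and 3 when their stacks agree: evidently $f_2(x,z,z) = f_3(x,z,z)$, where $f_i$ denotes the probability that player $i$ is first eliminated (so $f_1 = f$). In the first round, the two transitions corresponding to a players-2-vs-3 bet each occur with probability $1/6$ and eliminate the losing player immediately, since the bet size is $\min(z,z) = z$; hence $f_2(x,z,z), f_3(x,z,z) \ge 1/6$, and $f_1 + f_2 + f_3 = 1$ gives $f(x,z,z) \le 2/3$.

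The only step requiring genuine care is observation (b), which reduces to a short case analysis over the possible orderings of $(x,y,z)$ (including ties where two or more stacks coincide). The supremum argument itself is routine, since $f \le 1$ makes $M$ finite and the recursion is immediate from the Markov property.
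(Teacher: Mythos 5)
Your proof is correct, and for the first bound it takes a genuinely different route from the paper. The paper gets $f\le\frac45$ by a single event estimate: the probability that players 2 and 3 eliminate one another before player 1 ever plays is at least $\sum_{n\ge1}(1/6)^n=\frac15$. You instead run a one-step analysis plus a supremum/fixed-point argument: at most $2$ of the $6$ transitions can eliminate player 1, and at least $3$ transitions end the game (one per pair, where the weakly poorer player loses), so $M:=\sup f\le\frac16(2+3M)$, i.e.\ $M\le\frac23$. Both of your combinatorial observations are correct, and since $f\le 1$ makes $M$ finite the fixed-point step is sound; your argument therefore proves the strictly stronger uniform bound $f\le\frac23$, which already subsumes the lemma's second claim $f(x,z,z)\le\frac23$ without the extra symmetry argument. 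What each approach buys: the paper's proof is a two-line observation requiring no recursion, and $\frac45$ is all that is needed downstream (it is baked into $\alpha_n$ and Lemma \ref{lem:use_me}); your argument costs a short case analysis but yields a sharper constant that would slightly improve the later numerical margins. Your separate argument for $f(x,z,z)\le\frac23$ is essentially the paper's (if players 2 and 3 meet in round 1 with equal stacks, the loser is eliminated, so player 1 is safe); one small remark there is that you invoke $f_1+f_2+f_3=1$, which presupposes almost-sure termination — either note that your observation that at least half the transitions end the game gives termination, or simply use disjointness, $f_1+f_2+f_3\le1$, which is all the inequality requires.
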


\begin{proof}
Note that for any $x,y,z>0$ we have that 
\begin{align}
1-f(x,y,z)\ge \sum_{n=1}^\infty \left(\frac16 \right)^n = \frac15
\label{lem:frac15}
\end{align}
as the right hand side is (a lower bound on) the probability that one of the other two players eliminates the other without player 1 even playing.  The first claim follows.  

For the second claim simply note that if player 1 is not selected  for the first round (this has probability $1/3$) then they are not the loser.
%As a consequence, for any $x,y,z>0$
%\begin{align}
%f(x,y,z)\le \frac45,\label{45}
%\end{align}
%as claimed.
\end{proof}

Let $h_n(x,y,z)$ be the probability that player 1 is eliminated in exactly round $n$ when the initial stacks are $x,y,z$.  Then 
\begin{equation}
h_1(x,y,z)=\frac{\indic{x,y,z>0}}{6}\Big[\indic{x\le y}+\indic{x\le z}\Big]. 
\label{h1} 
\end{equation}

Moreover, for general $a,b,c$ and $n\ge 2$ we have 
\begin{align}
h_n(a,b,c)&=\frac{\indic{a,b,c>0}}{6}\Big[h_{n-1}(2a,b-a,c)+h_{n-1}(2a,b,c-a)\nonumber\\
&\phantom{=\frac{\indic{a,b,c>0}}6\Big[}+h_{n-1}(a-b,2b,c)+h_{n-1}(a,2b,c-b)\nonumber\\
&\phantom{=\frac{\indic{a,b,c>0}}6\Big[} +h_{n-1}(a-c,b,2c)+h_{n-1}(a,b-c,2c)\Big].
\label{hrecursion}
\end{align}
Appendix \ref{sec:hrec-eg} provides an example of unrolling $h_2$ (i.e., expanding the recurrence).

\begin{lemma}
\label{lem:use_me}
    Let $x_1,y_1,z_1,x_2,y_2,z_2>0$.  Then 
\begin{align*}
f(x_1,y_1,z_1)-f(x_2,y_2,z_2)
&\ge \sum_{j=1}^n (h_j(x_1,y_1,z_1)-h_j(x_2,y_2,z_2))-(1/2)^{n}(4/5).
%\label{use_me}
\end{align*}
\end{lemma}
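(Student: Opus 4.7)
The plan is to write $f = \sum_{j \ge 1} h_j$, split the infinite series at $n$, and control the tail using the previous lemma together with a geometric bound on the probability that the game is still ongoing after $n$ rounds. Concretely,
\begin{align*}
f(x_1,y_1,z_1) - f(x_2,y_2,z_2) &= \sum_{j=1}^n [h_j(x_1,y_1,z_1) - h_j(x_2,y_2,z_2)] \\
&\quad + \sum_{j=n+1}^\infty [h_j(x_1,y_1,z_1) - h_j(x_2,y_2,z_2)],
\end{align*}
and since each $h_j \ge 0$, the claim reduces to proving the one-sided tail estimate
$$\sum_{j=n+1}^\infty h_j(x_2,y_2,z_2) \le (1/2)^n (4/5).$$

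To prove this tail bound, I would interpret the left-hand side probabilistically: it is exactly the probability that player 1 is eliminated at some round strictly greater than $n$, starting from the state $(x_2,y_2,z_2)$. By the Markov property of the stack-size chain, this is at most
$$\Pr(\text{game still ongoing after round } n \mid \text{start at } (x_2,y_2,z_2)) \cdot \sup_{x',y',z'>0} f(x',y',z'),$$
and the preceding lemma bounds the supremum by $4/5$.

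It remains to show that from any initial state the game has probability at most $(1/2)^n$ of surviving $n$ rounds. Inspecting the six equiprobable transitions in Figure~\ref{fig:dtmc} from a state with $0 < x \le y \le z$, the three transitions landing at $(0,y+x,z)$, $(0,y,z+x)$ and $(x,0,z+y)$ each introduce a zero coordinate and therefore terminate the game; these correspond to the poorer player in each of the three possible pairings losing an all-in bet. Any equalities among $x,y,z$ only create further terminating transitions. Hence each round ends the game with probability at least $1/2$, and iterating the Markov step yields $\Pr(\text{survive } n \text{ rounds}) \le (1/2)^n$.

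I do not anticipate a substantive obstacle: the argument is a clean combination of the series representation $f = \sum h_j$, the Markov property used to condition on survival, the uniform bound $f \le 4/5$ from the preceding lemma, and the elementary observation that 3 of the 6 single-step transitions always end the game. The only mild subtlety is noticing that the $(1/2)^n$ factor comes precisely from this worst-case per-round elimination probability, and that the conditional state after $n$ surviving rounds can be any triple of positive stack sizes, so one must invoke the uniform version of the preceding lemma rather than any specific value.
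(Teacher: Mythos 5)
Your proposal is correct and follows essentially the same route as the paper: the paper bounds $f(x_1,y_1,z_1)$ below by the partial sum and $f(x_2,y_2,z_2)$ above by the partial sum plus $(1/2)^n(4/5)$, with the error term coming from exactly your tail argument (probability at most $(1/2)^n$ that no one has been eliminated in the first $n$ rounds, times the uniform bound $f\le 4/5$). Your added justification that at least three of the six equiprobable transitions terminate the game, giving the per-round factor $1/2$, is the same observation the paper leaves implicit.
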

\begin{proof}
Note that 
\begin{align}
f(x_1,y_1,z_1)&\ge \sum_{j=1}^n h_j(x_1,y_1,z_1)\\
f(x_2,y_2,z_2)&\le \sum_{j=1}^n h_j(x_2,y_2,z_2)+(1/2)^n(4/5)
\end{align}
where the last term comes from the possibility that no one has lost in the first $n$ rounds (probability at most $(1/2)^n$) and then at some point thereafter player 1  loses (probability at most $4/5$ by Lemma \ref{lem:frac15}).
\end{proof}

Henceforth we will restrict ourselves to consider only the region appearing in Theorem \ref{thm:want_to_swap}, i.e.
\begin{equation}
\label{eqn:V}
V := \{(x,y,z):{0<x<y<z} \}
\end{equation}
The expressions \eqref{h1} and \eqref{hrecursion} thus show that $h_n(a,b,c)$ can be expressed as a finite sum of indicators of subsets of the 3-simplex times $(1/6)^n$.  
% Our main result (Theorem \ref{thm:informal_want_to_swap} formally stated) is the following:
% \begin{theorem}
% \label{thm:want_to_swap} 
% For $0<x<y<z$,  
% %the following are true
% \begin{align}
% f(x,y,z)>f(y,x,z)>f(z,x,y).
% \end{align}
% %\item If $x\le y<z$ then 
% %$f(y,x,z)>f(z,x,y)<f(y,x,z)$
% \end{theorem}
% This says that if another player starts with more money than you then you are more likely (than them) to be the loser.
% Note that in our definition of $f$, the second and third positional argument is unordered, i.e., $f(y,x,z)=f(y,z,x)$.\\
Let 
\begin{equation}
 \dn(x,y,z):=\sum_{j=1}^n h_j(x,y,z)-\sum_{j=1}^n h_j(y,x,z).
\label{eqn:dn}   
\end{equation}
It follows that for any fixed $n$ there are only finitely many different values for  $\dn$, %varies depending on the region in $\mathbb{R}^3$ under consideration, e.g., we might restrict ourselves to the region $\{(x,y,z):{0<x<y<z},\dspace {x+y \geq z}\}$. Thus, 
so $\dn$ can be viewed as a function defined on finitely many regions that form a partitioning of the 3-simplex. 
%containing the point $(x,y,z)$.
%on which it is evaluated. 
We will use $\dnR$ to explicitly denote the value of $\dn$ evaluated on region $R$.
% \todo[inline]{I moved the $x+y+z=1$ to the footnote as above -- I think it's better not to add this into $V$, to avoid confusion with the actual condition specified in the theorem; plus later on I mentioned how $V$ should not contain a free-floating constant term in its inequalities when adapting the proof.}
% \item $\dn>c$ means $\forall R \subset V: \dnR > c$, where $V \subset \mathbb{R}^3$ represents a specific region.\todo[inline]{I'm not really sure what is being said here.  For each $n$ there is a collection of relevant regions (that form a partition of the simplex or of $\{(x,y,z):0<x,y,z\}$) and I guess you are making a statement for all regions in that collection (the collection depends on $n$ I guess?)  But isn't the statement that you are making simply equivalent to $\Delta_n(x,y,z)>c$ for every $(x,y,z)$ in $V$?}

Theorem \ref{thm:want_to_swap} follows from the following two Lemmas, which we will verify via a computer-assisted proof explained in Section \ref{sec:mip}.
\begin{lemma}
\label{lem:swap_xy1}
For all $0<x<y<z$, $\Delta_4(x,y,z)\ge 
\redbox{\dfrac{43}{648}}
>\frac{1}{20} =(1/2)^4(4/5)$.
  \end{lemma}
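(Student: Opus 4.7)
The plan is to exploit the fact that, for fixed $n$, $\Delta_n$ is a piecewise constant function on $V$ (as noted after \eqref{eqn:dn}). By iterating the recursion \eqref{hrecursion} starting from the base case \eqref{h1}, each $h_j(x,y,z)$ for $j \leq 4$ can be written as a signed sum over sequences of at most $j-1$ transitions, where each term is $(1/6)^j$ times an indicator of a polyhedral subset of $V$ defined by a few linear inequalities in $(x,y,z)$ with small integer coefficients. Applying the same procedure to $(y,x,z)$ and subtracting yields an explicit representation of $\Delta_4$ as a signed, weighted sum of indicators of polyhedra in $V$.

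I would first compute $h_1, h_2, h_3, h_4$ explicitly, carefully tracking the factor $\indic{a,b,c>0}$ at each step, which prunes many branches of the recursion whenever a transition sends a stack to $0$ (and hence ends the game, contributing only to the $h_1$ inside that branch). Doing the same for $(y,x,z)$ and subtracting then produces $\Delta_4$ in closed form. The hyperplane arrangement spanned by all the linear inequalities appearing in these indicators partitions $V$ into finitely many open polyhedral cells, on each of which $\Delta_4$ takes a constant rational value, exactly as set up in the definition of $\dnR$.

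The remaining task is to verify that, on every such cell $R$, the constant $\Delta_{4,R}$ is at least $43/648$. This is where the main obstacle lies: after four rounds of the recursion, the number of cells produced by the arrangement is too large for manual enumeration, and each cell is specified only implicitly by the signs of many linear functions of $(x,y,z)$. The natural remedy, which matches the framework of Section \ref{sec:mip}, is to recast the question as a constrained optimization problem: minimize $\Delta_4(x,y,z)$ subject to $(x,y,z)\in V$, and certify via mixed-integer linear programming (with binary variables encoding which side of each hyperplane a point lies on, and with the strict inequalities relaxed to non-strict ones together with a margin argument) that the optimum is at least $43/648$. The final comparison $43/648 > 1/20$ is then immediate, since $43\cdot 20 = 860 > 648$.
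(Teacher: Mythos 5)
Your proposal is correct and follows essentially the same route as the paper: unroll the recursion to write $\Delta_4$ as a piecewise-constant function on a polyhedral partition of $V$, then certify the lower bound $43/648$ by a mixed-integer linear program with indicator (binary) variables, relaxing strict inequalities to non-strict ones via the scale-invariance/margin argument. The only difference is cosmetic: the paper, for computational efficiency, splits the single MILP into two (bounding $\Delta_3-\alpha_3\ge -1/60$ and $h_4(x,y,z)-h_4(y,x,z)\ge -11/648$ separately and summing, as in Equation \eqref{eqn:break-up-milp}), whereas you minimize $\Delta_4$ directly, which the same framework also supports.
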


\begin{lemma}
\label{lem:swap_yz1}
For all $0<x<y<z$, $\Delta_4(y,z,x)\ge 
\redbox{\dfrac{37}{648}}
>\frac{1}{20}=(1/2)^4(4/5)$.
\end{lemma}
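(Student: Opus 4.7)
The plan is to mirror the approach that would be used for Lemma \ref{lem:swap_xy1}, with the arguments $(y,z,x)$ and $(z,y,x)$ in place of $(x,y,z)$ and $(y,x,z)$. First, I would iterate the recursion \eqref{hrecursion} four times starting from the base case \eqref{h1}, writing each of $\sum_{j=1}^4 h_j(y,z,x)$ and $\sum_{j=1}^4 h_j(z,y,x)$ explicitly as a finite sum of indicators of affine subsets of $V$ scaled by powers of $1/6$. The difference $\Delta_4(y,z,x)$ then becomes piecewise constant on the partition of $V$ whose cells are the atoms of the finite hyperplane arrangement cut out by those indicators. As a reassuring sanity check, on all of $V$ the round-$1$ contribution is already $h_1(y,z,x)-h_1(z,y,x)=1/6$, since $y<z$ forces $h_1(y,z,x)=1/6$ while $z$ being the maximum forces $h_1(z,y,x)=0$; thus the target $37/648$ sits well below the round-$1$ bias, and the rounds $j=2,3,4$ can perturb it only by amounts of order $(1/6)^j$.

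The second step is to invoke $\meshalgo$ to enumerate the cells $R$ of this partition and verify $\dnr{4,R}(y,z,x)\ge 37/648$ on each one. The delicate cells lie on the boundary of $V$ (where $x$ is close to $0$, $y$ is close to $x$, or $y$ is close to $z$) and in interior regions where many unusual indicators from deep unrollings of the recursion activate simultaneously; it is precisely on these cells that $\Delta_4(y,z,x)$ could plausibly drop toward $1/20$, so a per-cell verification cannot be avoided.

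The main obstacle is the combinatorial explosion: fully unrolled, $h_4$ carries up to $6^3=216$ leaves, each contributing an affine constraint, producing far too many cells for hand case-analysis. Following the two-stage reduction announced in the introduction, I would for each candidate cell $R$ recast the question \emph{``does there exist $(x,y,z)\in R\cap V$ with $\dnr{4,R}(y,z,x)<37/648$?''} as a constrained optimization problem, relax the strict inequalities defining $V$ and the indicator conditions to non-strict inequalities with a small positive slack, and encode the resulting piecewise-linear feasibility problem as a mixed-integer linear program as described in Section \ref{sec:mip}. Infeasibility of every such MIP, certified by a solver, then gives $\Delta_4(y,z,x)\ge 37/648$ uniformly on $V$; since $37/648>1/20$ is elementary, this completes the lemma.
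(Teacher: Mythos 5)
Your proposal is correct and follows essentially the same route as the paper: unroll \eqref{h1}--\eqref{hrecursion}, view $\Delta_4(y,z,x)$ as piecewise constant on the induced partition of $V$, and certify the lower bound with the MILP framework of Section \ref{sec:mip}, where the replacement of strict by non-strict ($\epsilon$-slack) inequalities is justified by scale invariance and the homogeneity of the constraints (Theorem \ref{thm:eq-opt-obj}), not by a naive relaxation argument. The only organizational difference is that the paper derives $37/648$ by summing solver-certified bounds from two smaller MILPs via \eqref{eqn:break-up-milp}, namely $\Delta_3(y,z,x)-\alpha_3\ge -23/1080$ and $h_4(y,z,x)-h_4(z,y,x)\ge -7/324$, rather than testing infeasibility of the single inequality $\Delta_4(y,z,x)<37/648$ directly.
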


\begin{proof}[Proof of Theorem \ref{thm:want_to_swap}]
Let $0<x<y<z$.  From Lemmas \ref{lem:use_me} and \ref{lem:swap_xy1} we have that 
\begin{align}
f(x,y,z)-f(y,x,z)\ge \Delta_4(x,y,z)-(1/2)^4(4/5)>0.
\end{align}
This proves the first claim of the theorem.

From Lemmas \ref{lem:use_me} and \ref{lem:swap_yz1} we have that 
\begin{align}
f(y,z,x)-f(z,y,x)\ge \Delta_4(z,y,x)-(1/2)^4(4/5)>0.
\end{align}
This proves the second claim.
\end{proof}

\subsection{The \text{\meshalgo } Algorithm}
\label{sec:mesh}

% For $0<x\le y\le z$ we have the base cases from \eqref{h1}:
% \begin{align}
% h_1(x,y,z)&=\frac13\\
% h_1(y,x,z)&=\frac16+\frac16\indic{y=x}\\
% h_1(z,x,y)&=\frac16\indic{z=y}+\frac16\indic{z=x}.
% \end{align}

Equation \eqref{hrecursion} shows how each of the $h_i$'s can be unrolled into a linear combination of indicator variables. This gives rise to a method to prove Lemma \ref{lem:swap_xy1} and \ref{lem:swap_yz1} with computer assistance as follows:

\newcommand{\indfn}[1]{\texttt{indicators}{\left(#1\right)}}
\begin{algorithm}[H]
\caption{\meshalgo}\label{alg:mesh}
\begin{algorithmic}[1]
    \State \textbf{Input: }$V = \{(x,y,z):0<x<y<z\}$
    \State \textbf{Output: }$n$
    \State $n \gets 2$\
    \While {true}
        \State $R_1, ..., R_{a_n} \gets \indfn{\sum_{i=1}^n{h_j(x,y,z)}}$
        \State $S_1, ..., S_{a_n} \gets \indfn{\sum_{i=1}^n{h_j(y,x,z)}}$
        \ForAll {region $R_j \cap S_k$}
            \State $dn \gets \hyperref[eqn:dn]{\dnr{n, V \cap R_j \cap S_k}}$
            \If{$dn \leq \alphaval$}
                \State $n \gets n+1$
                \State \textbf{break}
            \EndIf
        \EndFor
    \EndWhile
    \State \textbf{return} $n$
\end{algorithmic}
\end{algorithm}
where the $\indfn{\cdot}$ function extracts the indicators from its argument. To prove Theorem \ref{thm:want_to_swap}, we default the input $V$ to its definition in \eqref{eqn:V}.

As aforementioned, the expressions $\sum_{i=1}^n{h_i(\cdot)}$ are just sums of real numbers times indicators of subsets of the 3-simplex. This divides the 3-simplex into $a_n<\infty$ regions. 
Thus, one can think of each $h_i(\cdot)$ as a layer of partitioning of $V$, and $\dn$ (which is a linear combination of the $h_i$'s) as multiple layers of partitionings superimposed on each other. This forms a ``mesh'' on the 3-simplex. As $n$ increases, the ``mesh'' gets finer and finer.

The algorithm essentially tries to find the first $n$ for which $\dn > \alphaval$, i.e., $\forall R \subset V: \dnR > \alphaval$. Its correctness follows from the exhaustive evaluation of $\dn$ on all subregions (``mesh-grids'') in $V$ for a given $n$, until it finds a region $V \cap R_j \cap S_k$ for which 
\begin{equation} 
\dn \leq \alphaval. 
\label{eqn:mesh-ineq}
\end{equation}
If we can find the $n$ at which the algorithm terminates, then we can deduce the ``magic numbers'' (highlighted) in Lemma \ref{lem:swap_xy1} and \ref{lem:swap_yz1}, and complete the proof for Theorem \ref{thm:want_to_swap}.

% \begin{itemize}
% \item Fix $n$.  
% \item Using the recursion and the value of $h_1$, find the function $\sum_{j=1}^n h_j(x,y,z)$, that is a sum of real numbers times indicators of regions $R_1,\dots, R_{a_n}$. 
% \item Similarly, find the function $\sum_{j=1}^n h_j(y,x,z)$, (it's just switching $x$ and $y$ above) that is a sum of real numbers times indicators of regions $S_1,\dots, S_{a_n}$.
% \item Restrict to the region $V=\{(x,y,z): x<y<z\}$ (for example) 
% \item For each region $R_i\cap S_k$, compute 
% \[\dn(x,y,z):=\sum_{j=1}^n h_j(x,y,z)-\sum_{j=1}^n h_j(y,x,z)\] for $(x,y,z)\in V\cap R_i\cap S_k$.
% \item if $\dn>(1/2)^n(4/5)$ on every region then we are done.  Otherwise increase $n$ and repeat.
% \end{itemize}

%% ADD EXAMPLE of handworking sumh in APPENDIX

% As we will show in Section \hyperref[5]{\ref{sec:results}}, it turns out that for $n=4$ we have that for every subregion of $V=\{(x,y,z):x<y<z\}$, 
% $\dn(x,y,z)\ge \frac{43}{648} >(1/2)^4(4/5)$, 
% hence the claim $f(x,y,z)>f(y,x,z)$ for $0<x<y<z$ follows.

\blank{
\section{Background}

\subsection{The Mathematical Side}
\subsubsection{Gambler's Ruin}
First consider the basic two-player gambler's ruin problem studied in \cite{feller1991introduction}: Two gamblers, player 1 and 2, with initial stack size $x$ and $y$ respectively, play a series of independent games. At each game, a fair coin flip is made to decide the giver and the receiver. The giver transfers one unit to the receiver. The players continue the game until one of them is broke. The classical result is that:
\[
\Pr_{x,y}{(\text{player 1 goes broke})} = \frac{y}{x+y}
\]

A natural extension of this game is the three-player setting, first studied by \citeauthor{bachelier1912calcul} in \citeyear{bachelier1912calcul}. In this setting, one is concerned with the ``elimination'' probability, i.e., what is the chance of any player going broke first? There are several variations of the three-player game, depending on which two of the three players participate in each game, and how the stacks are transferred. 
In this work, we study the ``All in'' version \cite{diaconis2022gambler}: At each game, the participating players are chosen uniformly at random, and the bet size is $\min{(X,Y)}$, where $X$ and $Y$ are the current stack sizes of the two players respectively.

\newcommand{\statespace}{\mathcal{S}}
\subsubsection{Markov Chains}
\begin{defn}
    Consider a sequence of random variables $(X_n)_{n\geq 0}$, 
    $(X_n)_{n\geq 0}$ is a (discrete-time) \textit{Markov Chain} (MC) with \textit{state space} $\statespace$ if:
    \begin{align*}
        \Pr{(X_{n+1}=x_{n+1}|X_n=x_n, X_{n-1}=x_{n-1}, \hdots, X_0=x_0)} 
        &= \Pr{(X_{n+1}=x_{n+1} | X_n=x_n)}\\
        &= P_{x_n, x_{n+1}}
    \end{align*}
    for all $n\in \mathbb{N}, x_0, \hdots, x_n, x_{n+1} \in \statespace$.
    
    $P_{ij}$ denotes the (one-step) \textit{transition probability}, i.e., the probability of moving from state $i$ to $j$ in one unit of time.

    This means the value of $X_{n+1}$ only depends on the value of $X_n$, but not $X_{n-1}, ..., X_1, X_0$. In other words, the future is independent of the past given the current time state. This is known as the \textit{Markov property} \cite{kemeny1966markov}.
\end{defn}

In this problem, we model the joint-stacks of the three players as a \textit{discrete-time Markov chain} (DTMC), in the state space
\[
\statespace=\{(x_1,x_2,x_3)\in \reals^3: x_1,x_2,x_3 \geq 0, x_1+x_2+x_3=N\}
\]
where $N:=x+y+z$ is the (fixed) sum of the three players' stacks. 
Figure \ref{fig:dtmc} shows the one-step transitions from the initial state $(x,y,z)$.

\subsection{The Computational Side}
}

\section{Mixed-integer Linear Programming}
\label{sec:background}
As we will explain in Section \ref{sec:mip}, our proof employs a mixed-integer linear programming (MILP) framework to establish a lower bound on the difference in player 1’s elimination probability when starting with a stack size of $x$ versus $y$. 
In this section, we introduce the necessary terminology and describe several algorithms for solving MILPs.

A \textit{linear program} (LP) is an optimization problem where the objective function and all constraints are linear. The goal is to maximize/minimize a linear objective function subject to a set of linear inequality constraints, with all decision variables being continuous.

A \textit{mixed-integer program} (MIP) extends linear programming with the additional constraint that some of the decision variables are restricted to integer values.

A \textit{mixed-integer linear program} (MILP) combines both ideas: It is a specific class of MIPs with only linear constraints and a linear objective function. Formally:
\begin{defn}
    Given a matrix $A\in \reals^{m \times n}$, vectors $\vect{b} \in \reals^m$ and $\vect{c} \in \reals^n$, and a non-empty subset $I \subset \{1,\hdots,n\}$, the \textit{mixed-integer linear program} $\text{MILP}=(A,b,c,I)$ is to solve:
    \begin{equation*}
    \label{eqn:milp}
        \vect{c}^* = 
        \min{\{ \vect{c}^T \vect{x} \mid 
        A \vect{x} \leq \vect{b},\, 
        \vect{x} \in \reals^n,\, 
        x_j \in \mathbb{Z} \dspace\forall j \in I 
        \}}
    \end{equation*}
    The vectors $\vect{x}$ that satisfy all constraints are called \textit{feasible solutions} of the MILP. 
    A feasible solution $\vect{x}^*$ is \textit{optimal} if its objective value $\vect{c}^T \vect{x}^* = \vect{c}^*$ \cite{achterberg2007constraint}.
\end{defn}
The integrality constraints make MILPs useful for modelling the discrete nature of some variables, such as an indicator variable whose values are restricted to 0 or 1.

\subsection{Algorithms for solving MILPs}

\paragraph{Branch-and-bound}
The \textit{branch-and-bound} procedure \cite{land1960automatic} is widely used for solving all sorts of optimization problems. It systematically explores the solution space by dividing the given problem instance into smaller subproblems (branching), and using upper and lower bounds to prune suboptimal solutions (bounding). By eliminating regions of the search space that cannot contain the optimal solution, branch-and-bound avoids a complete enumeration of all potential solutions, while guaranteeing that the global optimum is found.

\paragraph{Cutting Planes}
The \textit{cutting-plane} technique \cite{gomory2010outline, gomory1960solving, gomory1963analog} is typically used only for MIPs. It begins by solving a \textit{relaxation} of the problem, where integrality constraints are ignored, then incrementally adds linear constraints (cutting planes) of the form $\vect{a}^T \vect{x} \leq \vect{b}$ to exclude fractional solutions, while retaining feasible integer points. These cuts progressively tighten the feasible region until an optimal integer solution is reached.

\paragraph{Branch-and-cut}
The \textit{branch-and-cut} method \cite{padberg1991branch} combines the strength of the branch-and-bound and the cutting-plane algorithm: It solves the problems with branch-and-bound, while incorporating cutting planes to tighten the feasible region and eliminate fractional solutions. By integrating these two approaches, branch-and-cut efficiently narrows the search space and accelerates the convergence to an optimal integer solution.

Our implementation of the MILP model for $\meshalgo$ uses the branch-and-cut algorithm. 
A theoretical guarantee of this algorithm is that, if we are minimizing but cannot prove optimality of the found solution, then a \textit{lower bound} on the optimal value can be deduced from the algorithm \cite{mitchell2002branch}. 
This ensures the correctness of the proof.

% The branch-and-bound algorithm with cutting planes, to enable efficient exploration of the solution space and tightening the relaxation of the MIP formulation \cite{lodi}. Therefore, to understand how it works, we shall first understand how the constituent methods work. Branch-and-bound is a tree-based approach, where the problem is recursively divided into smaller subproblems by branching on decision variables, thus creating a search tree that is systematically explored. This method guarantees optimality by exhaustively evaluating all potential solutions within the feasible region. However, it is computationally expensive due to the exponential growth of subproblems.

% On the contrary, cutting planes, originally proposed by Gomory, are linear inequalities added to the relaxation of the problem to exclude infeasible integer solutions without eliminating feasible ones, thus tightening the bounds used in the branch-and-bound process. By integrating it with branch-and-bound, the branch-and-cut algorithms allows for more effective pruning of the search tree, while preserving the optimality of the algorithm 
% The combination has proven particularly effective in solving large-scale MIPs that are otherwise intractable using simpler methods.

\blank{\section{\meshalgo: The General Proof Framework}
\label{sec:bigpic}
In this section, we derive the theoretical proof framework, \meshalgo.

\subsection{Preliminary proofs}
}

\section{Implementing \meshalgo}
\label{sec:mip}
%% MOTIVATION
The \hyperref[sec:mesh]{\meshalgo} algorithm introduces two computational challenges:
\begin{enumerate}
    \item \textit{Representation of $\dn$}: How to represent $\dn$ (which is a linear combination of indicators) in program?
    \item \textit{Exponential complexity of \meshalgo}: Each state has at most 6 parent states \eqref{hrecursion}. Thus, each $h_n$ corresponds to at most $k_n := 6^n$ regions, so each $\dn$ corresponds to at most $2\cdot \sum_{i=1}^n{k_i} = \frac{12}{5} \cdot (6^n - 1) = O(6^n)$ regions. This means that the algorithm will become quite slow for even moderate values of $n$ (e.g., $n \geq 6$), if we naïvely check through every subregion of $V$.
\end{enumerate}
In this section we describe our approach to handling the two issues, using a mixed-integer linear programming framework.

\subsection{The MILP Framework}
\label{sec:milp-repr}
The coefficient of all indicators in $h_n$ is the same, which means $h_n(x,y,z)$ could be represented as:
\begin{enumerate}
    \item a single coefficient $c_n := \left(\frac{1}{6}\right)^n$, and 
    \item a set $\Hset_n(x,y,z)$ of indicator constraints.
\end{enumerate}
Therefore, each $\dn$ consists of a set $\Hset_n^+ = \bigcup_{i=1}^n{\Hset_i(x,y,z)}$ of ``positive'' inequalities (from the positive indicators in \hyperref[eqn:dn]{$\dn$}), and a set $\Hset_n^- = \bigcup_{i=1}^n{\Hset_i(y,x,z)}$ of ``negative'' inequalities (from the negative indicators in $\dn$). 
Appendix \ref{sec:hrec-eg} includes the example for $\Delta_2$.

Note that an indicator can represent multiple inequalities, so $\Hset_n$ could be a two-dimensional list of linear inequalities in code. We also assume without loss of generality, that all indicators in $\Hset_n$ are for non-empty regions, since an indicator for an empty region, e.g., $\{(x,y,z): x<y \land x \geq y\}$, can be directly eliminated to 0 and would not contribute to $h_n$.

\subsubsection{Towards Constrained Optimization}
For simplicity of reference, from now on we denote $\alpha_n := \alphaval$.

Consider the body of the while loop in \hyperref[alg:mesh]{\meshalgo}, which essentially searches for a (non-empty) \textit{region} $R$ in $V$ for which $\dnR \leq \alpha_n$.
An alternative way to think of this problem is as follows: 

\begin{quotation}
    \centering 
    \emph{Does there exist a point in $V$ for which $\dn \leq \alpha_n$?} 
\end{quotation}
If we could find such a point, we would immediately know that the current value of $n$ fails, allowing us to proceed to the next $n$ value.
 
However, if at a given point $\dn > \alpha_n$, we cannot conclude that this inequality holds for the entire region $V$. 
This implies that what we actually need is the point where $\dn$ attains its minimum value. Note that there may be multiple such points, as all points within the same subregion (a ``mesh-grid''s) of $V$ will share the same $\dn$ value.

This motivates the formulation of a constrained optimization problem, where we try to minimize the value of $\dn(x,y,z)$ across the region $V$:
\begin{lemma}
\label{cor:milp_red}
    $\exists R \subset V: \Delta_{n,R}(x,y,z) \leq \alpha_n
    \iff 
    \min_{R \subset V}{\dn(x,y,z)} \leq \alpha_n$
\end{lemma}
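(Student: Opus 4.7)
The plan is to observe that the equivalence is essentially definitional, provided we unpack what $\min_{R\subset V}\dn(x,y,z)$ means in light of the piecewise-constant structure of $\dn$. Recall from the discussion following \eqref{eqn:dn} that $\dn$ takes only finitely many values on $V$: each region $R$ in the partition of $V$ induced by the indicators in $\Hset_n^+\cup \Hset_n^-$ carries a single constant value $\dnR$, and there are only finitely many such $R$. So the quantity $\min_{R\subset V}\dn(x,y,z)$ is really a minimum over the finite set $\{\dnR : R \subset V,\ R\neq \emptyset\}$, and is therefore attained.

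For the forward direction $(\Rightarrow)$, suppose there exists a non-empty region $R\subset V$ with $\dnR \le \alpha_n$. Pick any point $(x,y,z)\in R$; then $\dn(x,y,z)=\dnR\le \alpha_n$, and hence $\min_{R\subset V}\dn(x,y,z)\le \alpha_n$.

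For the reverse direction $(\Leftarrow)$, suppose $\min_{R\subset V}\dn(x,y,z)\le \alpha_n$. Since $\dn$ is piecewise constant and takes only finitely many values on $V$, this minimum is attained at some point $(x_0,y_0,z_0)\in V$. That point lies in exactly one region $R_0$ of the partition (we may take the region from $\Hset_n^+\cap \Hset_n^-$ that contains $(x_0,y_0,z_0)$), and on that region $\dn \equiv \dnr{n,R_0}$. Thus $\dnr{n,R_0} = \dn(x_0,y_0,z_0)\le \alpha_n$, as required.

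The only real subtlety, and the point I would spend the most care on, is making sure the partition is treated consistently: the regions are defined by a finite conjunction of (strict and non-strict) linear inequalities, so some may be empty, but the assumption made in Section \ref{sec:milp-repr} that $\Hset_n$ contains only indicators for non-empty regions rules this out. Beyond this bookkeeping, there is no genuine obstacle — the lemma is a clean reformulation that licences the switch from an existential quantifier over regions to an optimization problem, which is precisely what is needed to bring the MILP machinery to bear in the remainder of Section \ref{sec:mip}.
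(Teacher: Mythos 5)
Your proof is correct and takes essentially the same route as the paper: both directions amount to unpacking the definition of the minimum over the finitely many (non-empty) regions on which $\dn$ is constant, with the reverse direction handled via the region attaining the minimum (the paper's $R^*=\argmin_{R\subset V}\dnR$); your version just spells out the piecewise-constant structure and attainment a little more explicitly. (Minor slip: the partition is induced by the indicators in $\Hset_n^+\cup\Hset_n^-$, not $\Hset_n^+\cap\Hset_n^-$.)
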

\begin{proof}~
    \begin{enumerate}
    \item[$(\Rightarrow)$] 
    Suppose $\Delta_{n,R_1}(x,y,z)\leq \alpha_n$, then $\min_{R \subset V}{\dnR(x,y,z)} \leq \Delta_{n,R_1}(x,y,z)\leq \alpha_n$.
    \item[$(\Leftarrow)$]  
    Suppose 
    $\min_{R \subset V}{\dn(x,y,z)} \leq \alpha_n$, then $\dnr{n,R^*}(x,y,z) \leq \alpha_n$, where \\${ R^* = \argmin_{R \subset V}{\dnR(x,y,z)} }.$
    \end{enumerate}
\end{proof}
\begin{corollary}
    $\min_{R \subset V}{\dnR(x,y,z)} > \alpha_n \Rightarrow \dnR(x,y,z) > \alpha_n$.
\end{corollary}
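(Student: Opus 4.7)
The plan is to observe that this corollary is essentially a direct corollary of (one direction of) the preceding Lemma \ref{cor:milp_red}, and equivalently a one-line consequence of the definition of a minimum, so no new machinery is required. Concretely, I would first make explicit that the intended reading of the conclusion is universal in $R$: the statement to prove is ``$\min_{R \subset V} \Delta_{n,R}(x,y,z) > \alpha_n$ implies that for every $R \subset V$, $\Delta_{n,R}(x,y,z) > \alpha_n$.''

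My preferred approach is the contrapositive route via Lemma \ref{cor:milp_red}. The forward direction of that lemma asserts the implication
\[
\bigl(\exists R \subset V : \Delta_{n,R}(x,y,z) \leq \alpha_n\bigr) \;\Longrightarrow\; \min_{R \subset V} \Delta_{n,R}(x,y,z) \leq \alpha_n .
\]
Taking the contrapositive immediately yields
\[
\min_{R \subset V} \Delta_{n,R}(x,y,z) > \alpha_n \;\Longrightarrow\; \forall R \subset V : \Delta_{n,R}(x,y,z) > \alpha_n ,
\]
which is exactly the claim. Alternatively, one can give a direct one-line argument: for any $R \subset V$, by definition of minimum $\Delta_{n,R}(x,y,z) \geq \min_{R' \subset V} \Delta_{n,R'}(x,y,z)$, and the hypothesis then gives $\Delta_{n,R}(x,y,z) > \alpha_n$.

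There is no real obstacle here; the content of the corollary is purely logical rewriting of Lemma \ref{cor:milp_red}. The only point worth flagging in the write-up is the (implicit) universal quantifier over $R$ in the conclusion, since without it the statement would look tautological or ambiguous. The corollary's genuine purpose is methodological rather than mathematical: it licenses the MILP strategy described in the following section, in which one certifies the bound $\Delta_{n,R}(x,y,z) > \alpha_n$ simultaneously for all subregions $R \subset V$ by computing (or lower-bounding via branch-and-cut) the single global minimum on $V$, rather than enumerating the exponentially many mesh cells produced by \meshalgo.
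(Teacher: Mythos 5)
Your proposal is correct and matches the paper's own proof, which is precisely to take the contrapositive of the forward implication $(\Rightarrow)$ in Lemma \ref{cor:milp_red}; your additional one-line argument from the definition of the minimum and the remark about the implicit universal quantifier over $R$ are fine but add nothing beyond the paper's route.
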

\begin{proof}
    Take the contra-positive of the forward implication $(\Rightarrow)$ in Lemma \ref{cor:milp_red} above.
\end{proof}

\subsubsection{Towards MILP}
As aforementioned in Section \ref{sec:background}, MILP problems involve optimization scenarios where some variables are required to take integer values, which makes it easy to model the indicator variables in $\dn$. We can thus frame the constrained optimization problem as an MILP as follows:

\begin{milp}
\label{milp:origLP}
    \begin{array}{ll@{}ll}
    \text{variables} & x,y,z \in \reals,  \{i: i \in \Hset_n^+ \cup \Hset_n^-\}\\
    \text{minimize}  & \dn(x,y,z) &\\
    \text{subject to}& \indic{c} = 1 \Rightarrow c  &\text{(indicator constraints)}\\
                     & \indic{c} = 0 \Rightarrow \neg c\\
                     & 0<x<y<z & (V) \\
                     & \dn(x,y,z) - \alpha_n \leq 0
    \end{array}
\end{milp}

where $\alpha_n = \left(\frac{1}{2}\right)^n \left(\frac{4}{5}\right)$.\\

Note that there exists methods that can express the indicator constraints as linear constraints, such as the \textit{Big-M} method \cite{dantzig1948programming,soleimani2008modified,bazaraa2011linear,cococcioni2021big}.

\begin{lemma}
    For a given $n$ and non-empty region $R\subset V$, the above MILP \eqref{milp:origLP} is infeasible if and only if $\dn> \alpha_n$.
\end{lemma}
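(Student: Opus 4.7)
The plan is to prove the biconditional directly by unpacking the definition of MILP feasibility, using the piecewise-constant structure of $\Delta_n$ together with Lemma~\ref{cor:milp_red}. Since $\Delta_n$ is defined as a linear combination of indicators of subsets of the 3-simplex, it is constant on each region of the mesh, so it suffices to reason regionwise.

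For the $(\Leftarrow)$ direction (contrapositive): I would assume the MILP admits a feasible solution $(x^*,y^*,z^*)$ together with indicator values satisfying all constraints. Every such $(x^*,y^*,z^*)$ lies in some non-empty region $R\subset V$ of the mesh induced by $\mathcal{H}_n^+\cup\mathcal{H}_n^-$. The indicator constraints force each $\mathbb{1}_c$ to agree with the truth of $c$ at $(x^*,y^*,z^*)$, so the objective evaluates to $\Delta_{n,R}$. The final constraint $\Delta_n(x,y,z)-\alpha_n\le 0$ then gives $\Delta_{n,R}\le\alpha_n$, contradicting the hypothesis that $\Delta_n>\alpha_n$ on every non-empty region of $V$.

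For the $(\Rightarrow)$ direction (contrapositive): I would assume $\Delta_{n,R}\le\alpha_n$ for some non-empty region $R\subset V$. Choose any point $(x^*,y^*,z^*)\in R$, and assign each indicator variable $\mathbb{1}_c\in\mathcal{H}_n^+\cup\mathcal{H}_n^-$ to the value of $c$ evaluated at $(x^*,y^*,z^*)$. By construction both implications $\mathbb{1}_c=1\Rightarrow c$ and $\mathbb{1}_c=0\Rightarrow\neg c$ hold. The containment $R\subset V$ gives $0<x^*<y^*<z^*$, and with these indicator values the objective evaluates to $\Delta_{n,R}\le\alpha_n$, satisfying the final constraint. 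This is a feasible solution, so the MILP is feasible. Combined with Lemma~\ref{cor:milp_red}, this closes the equivalence.

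The main subtlety — more than a true obstacle — is handling the indicator constraints rigorously. As written in MILP~\eqref{milp:origLP} they are logical implications rather than linear inequalities, and a branch-and-cut solver needs them as the latter. One must appeal to a linearization such as the Big-M method cited after \eqref{milp:origLP} and verify that the resulting linear inequalities are logically equivalent to $\mathbb{1}_c=1\Leftrightarrow c$ on the bounded domain $V$. A secondary point to dispatch is the assumption, noted earlier in Section~\ref{sec:milp-repr}, that indicators corresponding to empty regions are removed in advance; this prevents spurious infeasibility from indicators that can never simultaneously be consistent. Once these bookkeeping details are handled, the proof is a direct translation between the regionwise language of Lemma~\ref{cor:milp_red} and the MILP's feasibility condition.
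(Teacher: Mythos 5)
Your proposal is correct and follows essentially the same route as the paper's own proof: both directions amount to translating MILP feasibility into the existence of a point $(x^*,y^*,z^*)$ in $V$ lying in some non-empty region $R$ with $\Delta_{n,R}\le\alpha_n$, and back. The only difference is that you argue by contrapositive and spell out explicitly that the indicator constraints pin each $\1_c$ to the truth value of $c$ at the chosen point (plus the Big-M linearization caveat), details the paper's shorter proof treats as definitional.
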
 
\begin{proof}~
    \begin{enumerate}
        \item[$(\Rightarrow)$]
        If the MILP is infeasible, then by definition $\nexists (x',y',z'): 0<x'<y'<z' \,\land\, \dnr{n,\{(x',y',z')\}} \leq \alpha_n$, which means $\nexists R \subset V: \dnR \leq \alpha_n$. So $\forall R \subset V: \Delta_{n,R}(x,y,z) > \alpha_n$ and the right-hand side holds.
        \item[$(\Leftarrow)$]
        If $\dn> \alpha_n$, i.e., $\forall R \subset V: \dnR > \alpha_n$, then there exists no non-empty region $R$ and hence no valid coordinates $(x',y',z')$, such that ${0<x'<y'<z'}$ and ${\dnr{n,\{(x',y',z')\}} \leq \alpha_n}$. Thus the MILP has no solution.
    \end{enumerate}
\end{proof}
As aforementioned in Section \ref{sec:background}, the branch-and-cut algorithm used by our implementation guarantees to output an objective value that is a lower bound of the true optimal objective value. Thus, the output objective value will be an absolute lower bound on $\dn$ (over $V$).

\subsubsection{Towards a solvable MILP}
\label{sec:second-reduction}
The previous MILP \eqref{milp:origLP} introduces another problem: Both $V$ and the indicator constraints may contain strict inequalities, i.e. $<$ or $>$. However, strict inequalities are in general, not permitted in LPs, because with such constraints, the feasible set is no longer closed \cite{bertsimas1997introduction}.
That is, for any solution within the feasible region, we can always find a better solution also within the feasible region by moving (infinitesimally) closer to a boundary.

Fortunately, there are two properties of the problem that we can use to resolve this issue:
\begin{enumerate}
    \item \textbf{Scale invariance}: $\forall k \in \reals^+: \dn(kx,ky,kz) = \dn(x,y,z)$ \dspace\cite{angel2024betting}
    \item \textbf{The constraints do not contain any ``free floating'' constants.} 
    In other words, all inequalities in the constraints can be written in the form:
    \begin{equation}
    \label{eqn:linear-combo-constraint}
        ax+by+cz \oplus 0
    \end{equation}
    where $a,b,c \in \mathbb{Z}$, and $\oplus \in \{<,>,\leq,\geq\}$ is a comparison operator.
\end{enumerate}

We now show that the previous MILP \eqref{milp:origLP} can be reduced to an MILP involving non-strict inequalities only, by converting all strict inequalities of the form (without loss of generality for $<$):
\begin{equation*}
    ax+by+cz>0
\end{equation*}
into
\begin{milp}
\label{milp: reducedLP}
    ax+by+cz \geq \epsilon
\end{milp}
where $\epsilon \in \reals^+$.\\

\newcommand{\origopt}{\Delta^*_1}
\newcommand{\redopt}{\Delta^*_2}
Let $\origopt$ be the optimal objective value for MILP \eqref{milp:origLP}, and $\redopt$ be the optimal objective value for the resulting MILP from \eqref{milp: reducedLP}.
We prove that the reduction is valid by proving the following equality:
\begin{theorem}
    $\origopt = \redopt$
\label{thm:eq-opt-obj}
\end{theorem}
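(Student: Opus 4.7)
The plan is to prove $\Delta^*_1 = \Delta^*_2$ by showing inequality in both directions, using the two structural properties (scale invariance, and absence of free constants) highlighted just before the theorem statement.

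For the easy direction $\redopt \ge \origopt$, I would observe that the reduced MILP's feasible set is contained in the original MILP's feasible set: every constraint of the form $ax+by+cz \ge \epsilon$ with $\epsilon > 0$ implies $ax+by+cz > 0$, and all other constraints (the non-strict ones, the indicator implications, and the objective bound $\Delta_n - \alpha_n \le 0$) are identical. Since a smaller feasible set can only raise the minimum of a fixed objective, we get $\redopt \ge \origopt$.

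The interesting direction is $\origopt \ge \redopt$. Given any feasible point $(x,y,z)$ of \eqref{milp:origLP} with the associated indicator assignments, I would exhibit a rescaled point $(kx,ky,kz)$ that is feasible for the reduced MILP and has the same objective value. Concretely, let $\mathcal{C}$ be the finite collection of strict linear inequalities $a_i x + b_i y + c_i z > 0$ that are active in the original problem (coming from $V$ and from whichever indicator constraints are enforced as strict at this point). Each of these evaluates to some strictly positive number $s_i := a_i x + b_i y + c_i z > 0$. Define
\begin{equation*}
k := \frac{\epsilon}{\min_i s_i} > 0.
\end{equation*}
Then for every $i$, $a_i(kx) + b_i(ky) + c_i(kz) = k s_i \ge \epsilon$, so the scaled point satisfies the reduced constraints in place of the strict ones. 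Every non-strict constraint $a x + b y + c z \ge 0$ (or $\le 0$) is preserved under positive scaling because of property (2), i.e.\ no free constants appear. The indicator values are also preserved, because each indicator's truth is determined by a homogeneous inequality whose sign is invariant under multiplication by $k > 0$. Finally, by the scale invariance property (1), $\Delta_n(kx,ky,kz) = \Delta_n(x,y,z)$, so the objective value is unchanged and the bound $\Delta_n - \alpha_n \le 0$ still holds. Thus $(kx,ky,kz)$ is feasible for the reduced MILP with the same objective, giving $\redopt \le \origopt$.

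The main obstacle I anticipate is being careful with the indicator constraints: one must check that the homogeneous form \eqref{eqn:linear-combo-constraint} really does cover every inequality generated by the recursion \eqref{hrecursion}, including those nested inside the indicators defining $\mathcal{H}_n^+$ and $\mathcal{H}_n^-$. This is where property (2) is doing the real work, and I would verify it by induction on $n$: the base case $h_1$ uses only $x \le y$ and $x \le z$, which are of the required homogeneous form, and the recursion substitutes linear combinations of $(x,y,z)$ with integer coefficients into indicator arguments, producing new inequalities of the same form. Once this structural lemma is in hand, the two inclusions above combine to give $\origopt = \redopt$.
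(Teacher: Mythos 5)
Your proposal is correct and follows essentially the same route as the paper: the easy direction via feasible-set containment, and the harder direction by rescaling a feasible/optimal point of the original MILP by a positive factor $k$ (your $k=\epsilon/\min_i s_i$ is the same as the paper's $k=\max_i \epsilon/\epsilon_i$), using homogeneity of all constraints to preserve feasibility and indicator values, and scale invariance of $\Delta_n$ to preserve the objective. The only difference is that you additionally sketch an inductive verification that every generated inequality is homogeneous, which the paper simply asserts as a structural property.
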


\begin{proof}~
\begin{enumerate}
    \item[$(\Rightarrow)$]$\origopt \leq \redopt$

    Since $ax+by+cz+d \geq \epsilon \Rightarrow ax+by+cz+d>0$, and the other constraints in MILP \eqref{milp:origLP} remain unchanged in MILP \eqref{milp: reducedLP}, any valid solution to MILP \eqref{milp: reducedLP} must also be a valid solution to MILP \eqref{milp:origLP}..
    
    Therefore, if $(x^, y^, z^*)$ is a valid solution to MILP \eqref{milp: reducedLP}, it must also be valid for MILP \eqref{milp:origLP}. Consequently, $\redopt$ must be a feasible objective value of MILP \eqref{milp:origLP}, as both MILPs share the same objective. By the optimality assumption for $\origopt$ (in a minimization MILP), we have $\origopt \leq \redopt$.

    \item[$(\Leftarrow)$] $\origopt \ge \redopt$
    
    Suppose $(x^*, y^*, z^*)$ is an optimal solution to MILP \eqref{milp:origLP} that produces $\origopt$. Denote the following:
    \begin{itemize}
        \item $S := $ the set of strict inequalities, each of the form $ax+by+cz>0$, satisfied by $(x^*, y^*, z^*)$. Note that $S$ also includes the negation of those non-strict inequalities violated by $(x^*, y^*, z^*)$.
        \item $N := $ the set of non-strict inequalities, each of the form ${ax+by+cz \geq 0}$, satisfied by $(x^*, y^*, z^*)$. Note that $N$ also includes the negation of those strict inequalities violated by $(x^*, y^*, z^*)$.
    \end{itemize}

We can write $S$ as:
\begin{equation*}
    S=
    \left\{\!\begin{aligned}
        a_1 x^* + b_1 y^* &+ c_1 z^* > 0\\[1ex]
        &\vdots\\[1ex]
        a_j x^* + b_j y^* &+ c_j z^* > 0
    \end{aligned}\right\}
    =
    \left\{\!\begin{aligned}
        a_1 x^* + b_1 y^* &+ c_1 z^* \geq \epsilon_1\\[1ex]
        &\vdots\\[1ex]
        a_j x^* + b_j y^* &+ c_j z^* \geq \epsilon_j
    \end{aligned}\right\}
\end{equation*}
for some $\epsilon_1, ..., \epsilon_j \in \reals^+$.

We can similarly write $N$ as:
\begin{equation*}
    N =
    \left\{\!\begin{aligned}
        a_1 x^* + b_1 y^* &+ c_1 z^* \geq 0\\[1ex]
        &\vdots\\[1ex]
        a_j x^* + b_j y^* &+ c_j z^* \geq 0
    \end{aligned}\right\}
\end{equation*}

Now construct a solution to MILP \eqref{milp: reducedLP}, by scaling $(x^*, y^*, z^*)$ by 
\[
{k := \max{\left\{\frac{\epsilon}{\epsilon_1}, \ldots, \frac{\epsilon}{\epsilon_j}\right\}} \in \mathbb{R^+}}
\]
so that
${k \cdot \epsilon_i \geq \epsilon\dspace \forall i \in \{1, \ldots, j\}}$.

Therefore, the new solution $(kx^*, ky^*, kz^*)$ satisfies the following sets of inequalities:
\begin{equation*}
    {S' =
    \left\{\!\begin{aligned}
        a_1 kx^* + b_1 ky^* &+ c_1 kz^* \geq \epsilon\\[1ex]
        &\vdots\\[1ex]
        a_j kx^* + b_j ky^* &+ c_j kz^* \geq \epsilon
    \end{aligned}\right\}
    }
    \quad \text{and} \quad
    {N' =
    \left\{\!\begin{aligned}
        a_1 kx^* + b_1 ky^* &+ c_1 kz^* \geq 0\\[1ex]
        &\vdots\\[1ex]
        a_j kx^* + b_j ky^* &+ c_j kz^* \geq 0
    \end{aligned}\right\}
    }
\end{equation*}

This implies that $(kx^*, ky^*, kz^*)$ satisfies the corresponding set of constraints in MILP \eqref{milp: reducedLP}, where the strict inequalities in MILP \eqref{milp:origLP} are transformed into non-strict inequalities. Therefore, $(kx^*, ky^*, kz^*)$ is a valid solution to MILP \eqref{milp: reducedLP}.

Moreover, by scale invariance, $(kx^*, ky^*, kz^*)$ will yield the same objective value in MILP \eqref{milp: reducedLP} as $(x^*, y^*, z^*)$ does in MILP \eqref{milp:origLP}.

In other words, from the optimal solution of MILP \eqref{milp:origLP}, which gives the objective value $\origopt$, we can construct a valid solution to MILP \eqref{milp: reducedLP} with the same objective value $\origopt$. Since we assumed $\Delta_2^*$ to be the optimal objective value of MILP \eqref{milp: reducedLP}, it must be at least as good as any other possible objective value, meaning $\redopt \leq \origopt$.
\end{enumerate}

Therefore, $\origopt = \redopt$.
\end{proof}

Note a consequence of the theorem above: 
\begin{corollary}
    There exists a valid solution to MILP \eqref{milp:origLP} if and only if there exists a valid solution to MILP \eqref{milp: reducedLP}.
\end{corollary}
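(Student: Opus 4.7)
The plan is to observe that this corollary is essentially the feasibility content of Theorem \ref{thm:eq-opt-obj}, and both directions are already furnished by the constructions appearing in its proof; no new ideas are required.

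First I would dispatch the $(\Leftarrow)$ direction. Given a feasible $(x',y',z')$ for MILP \eqref{milp: reducedLP}, each converted constraint of the form $ax+by+cz \geq \epsilon$ immediately implies $ax+by+cz > 0$ since $\epsilon > 0$. All remaining constraints (the indicator constraints, integrality, and the objective-bound $\dn - \alpha_n \leq 0$) are identical in the two MILPs, so $(x',y',z')$ is also feasible for MILP \eqref{milp:origLP}.

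For the $(\Rightarrow)$ direction, I would reuse the scaling construction from the proof of Theorem \ref{thm:eq-opt-obj}. Given a feasible point $(x^*,y^*,z^*)$ for MILP \eqref{milp:origLP}, every satisfied strict inequality $a_i x^* + b_i y^* + c_i z^* > 0$ in the set $S$ holds with some positive slack $\epsilon_i$. Setting $k := \max_i \{\epsilon/\epsilon_i\} \in \reals^+$ and rescaling to $(kx^*,ky^*,kz^*)$ gives a point satisfying each strengthened constraint $\geq \epsilon$, while the homogeneous form \eqref{eqn:linear-combo-constraint} of all constraints (no free-floating constants) together with scale invariance ensures the non-strict constraints in $N$ continue to hold. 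Hence $(kx^*,ky^*,kz^*)$ is feasible for MILP \eqref{milp: reducedLP}.

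There is no real obstacle beyond what has already been navigated in the theorem's proof: the two structural properties that make the backward direction work, namely scale invariance of $\dn$ and the absence of free-floating constants in the constraints, are exactly the ingredients used to establish $\origopt = \redopt$. Indeed, an alternative one-line derivation would be to cite Theorem \ref{thm:eq-opt-obj} directly, since under the standard convention that the optimal value of an infeasible minimization program is $+\infty$, the equality $\origopt = \redopt$ already encodes the equivalence of feasibility.
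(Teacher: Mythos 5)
Your argument is correct and is essentially the paper's own proof: both directions are obtained by reusing the constructions from the proof of Theorem \ref{thm:eq-opt-obj} (the trivial implication $ax+by+cz\geq\epsilon \Rightarrow ax+by+cz>0$ in one direction, and the scaling by $k$ in the other). The only caveat is your closing remark about citing the theorem directly via the ``infeasible $=+\infty$'' convention, which is slightly circular since the theorem's proof presupposes an optimal solution exists; but as you present it only as an aside, the main argument stands as given.
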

\begin{proof}
    In the proof for Theorem \ref{thm:eq-opt-obj} above, we have shown that from a solution to MILP \eqref{milp:origLP}, we can construct a solution to MILP \eqref{milp: reducedLP} (yielding the same objective value), and vice versa.
\end{proof}
So if MILP \eqref{milp: reducedLP} is infeasible, we know that MILP \eqref{milp:origLP} must also be infeasible.
With this solvable MILP \eqref{milp: reducedLP} which consists of only non-strict inequality constraints, we can implement $\meshalgo$ in code to complete the proof for Lemmas \ref{lem:swap_xy1} and \ref{lem:swap_yz1}, and thus for Theorem \ref{thm:want_to_swap}. 

However, we can make one further simplification, as we will explain in the next section.

\subsubsection{Towards easier-to-solve MILPs}
\label{sec:distr-opt}
We could rewrite:
\begin{align}
    \dn(x,y,z) - \alpha_n 
    &= \sum_{j=1}^{n}{h_j(x,y,z)} - \sum_{j=1}^{n}{h_j(y,x,z)} - \left(\frac{1}{2}\right)^n \left(\frac{4}{5}\right) \\
    {}&=
    \sum_{j=1}^{n-1}{h_j(x,y,z)} - \sum_{j=1}^{n-1}{h_j(y,x,z)} - \left(\frac{1}{2}\right)^{n-1} \left(\frac{4}{5}\right)\\
    &\dspace\dspace\dspace\dspace+
    h_n(x,y,z) - h_n(y,x,z) + \left(\frac{1}{2}\right)^n \left(\frac{4}{5}\right)\\
    &= \greenbox{\dnr{n-1}(x,y,z) - \alpha_{n-1}} + \greenbox{h_n(x,y,z) - h_n(y,x,z)} + \alpha_n
\label{eqn:break-up-milp}
\end{align}

This points us to breaking the previous MILP into two MILPs: one minimizes ${\dnr{n-1}-\alpha_{n-1}}$ using the same formulation as \eqref{milp:origLP}\footnote{
Except we add the constant $-\alpha_{n-1}$ to the objective function. This does not affect the solution; it merely shifts all the objective values down by $\alpha_{n-1}$. 
}, and one minimizes ${h_n(x,y,z)-h_n(y,x,z)}$ using the following formulation:
\begin{milp}
\label{milp:dhLP}
    \begin{array}{ll@{}ll}
    \text{variables} & x,y,z, \{i: i \in \Hset_n(x,y,z) \cup \Hset_n(y,x,z)\}\\
    \text{minimize}  & h_n(x,y,z) - h_n(y,x,z) \\
    \text{subject to}& \indic{c} = 1 \Rightarrow c\\
                     & \indic{c} = 0 \Rightarrow \neg c\\
                     & 0<x<y<z 
    \end{array}
\end{milp}

Therefore, suppose the two MILPs output $\dn^*$ and 
\[
\beta^* := h_n(x^*,y^*,z^*)-h_n(y^*,x^*,z^*)
\] 
respectively, then if $\dn^* + \beta^* + \alpha_n > 0$, we have found the $n$-value at which we can terminate \meshalgo. This holds even if the $(x,y,z)$ solutions of these MILPs do not live in the same mesh-grid --- since these MILPs produce absolute lower-bounds on their objective function over all valid regions of $V$, the sum of their objective values must still be a lower-bound on $\dn$.

This approach has two uses:
\begin{enumerate}
    \item \textit{Verifying $\Delta_n>\alpha_n$}: Suppose $n^* = \min{\{n\geq 2: \Delta_n > \alpha_n\}}$. Due to the constraint $\Delta_n-\alpha_n>0$ in MILP \eqref{milp:origLP}, the MILP would be infeasible at $n=n^*$. We could then backtrack by one step, and use the pair of MILPs (\ref{milp:origLP} and \ref{milp:dhLP}), minimizing $\dnr{n^*-1}$ and ${h_{n^*}(x,y,z)-h_{n^*}(y,x,z)}$ respectively, to verify that $\dn>\alpha_n$ actually holds using Equation \eqref{eqn:break-up-milp}. This would additionally yield a numerical lower bound on $\dn$.
    \item \textit{Parallelizing computations}: This technique is an example of a \textit{divide-and-conquer} approach, and it motivates how one may parallelize the computation of $\meshalgo$: By further unrolling the recursion from $\dnr{n-1}$ (i.e., to $\dnr{n-2}, \dnr{n-3}$, etc), one can decompose the original MILP \eqref{milp:origLP} into several smaller, computationally less-demanding MILPs.
\end{enumerate}

\subsection{Generalizing the proof}
\label{sec:adapt-algo}
Here we extend the possible use cases of the $\meshalgo$ framework slightly. 

Suppose that one wanted to use $\meshalgo$ to try to prove an inequality of the form
\begin{equation}
    f(s_1,s_2,s_3) \oplus f(t_1,t_2,t_3)
\end{equation}
under $V$, where 
\begin{itemize}
    \item $\oplus \in \{<,>,\leq,\geq\}$ is a comparison operator,
    \item $s_1,s_2,s_3,t_1,t_2,t_3$ are linear combinations of $x,y,z$, and
    \item $V$ consists of inequalities of the form \eqref{eqn:linear-combo-constraint} only\footnote{Here, we no longer constrain $V$ to its definition in \eqref{eqn:V}.}.
\end{itemize}

Then one would have to make the following changes to the algorithm.
\begin{itemize}
    \item Replace the comparison operators with $\oplus$ accordingly: e.g., if $\oplus$ is $\geq$, then the inequality $\dn>\alpha_n$ in the original definition \eqref{alg:mesh} should be replaced with $\dn \geq \alpha_n$;
    \item Replace $(x,y,z)$ with $(s_1,s_2,s_3)$, and $(y,x,z)$ with $(t_1,t_2,t_3)$ in the definition of $\dn$ \eqref{eqn:dn};
    \item Replace the definition of input $V$ to the algorithm.
\end{itemize}

\section{Results and possible extensions}
\label{sec:results}
In this section we present the numerical results based on our Python \href{\srccode}{implementation} of \meshalgo, which completes the proof for Theorem \ref{thm:want_to_swap}.

\subsection{Completing the proof for Lemma \ref{lem:swap_xy1}}
\begin{itemize}
    \item The MILP for $\dn$ \eqref{milp:origLP} is infeasible at \boldmath$n=4$\unboldmath. 
    \item $\Delta_3(x,y,z) - \alpha_3 \geq -\dfrac{1}{60}$ for all regions in $V$.\footnote{This minimum value occurs at, e.g., $(x,y,z)=(4,5,6)$.}\\
    \item $h_n(x,y,z) - h_n(y,x,z) \geq -\dfrac{11}{648}$.
    \footnote{This minimum value occurs at, e.g., $(x,y,z)=(7,8,23)$.}
\end{itemize}
Putting these together as done in Equation \eqref{eqn:break-up-milp}:
\begin{align*}
    \Delta_4(x,y,z) - \alpha_4 
    &= \left(\Delta_3(x,y,z) - \alpha_3 \right) + \left( h_4(x,y,z) - h_4(y,x,z) \right) + \alpha_4 \\
    &\geq -\frac{1}{60} - \frac{11}{648} + \left(\frac{1}{2}\right)^4 \left(\frac{4}{5}\right)\\
    &=\frac{53}{3240} > 0\\
    \Rightarrow \Delta_4(x,y,z) 
    &\geq 
    \redbox{\frac{43}{648}}
    \approx 0.066358
\end{align*}
Since $\Delta_4(x,y,z) > \alpha_4$ for all subregions in $V$, the claim $f(x,y,z)>f(y,x,z)$ for $0<x<y<z$ follows.

\subsection{Completing the proof for Lemma \ref{lem:swap_yz1}}

It turns out that the point at which $\dn(y,z,x) > \alpha_n $ for all regions in $V$ is also \boldmath$n=4$\unboldmath.
Using the same proof structure and making the changes as explained in Section \ref{sec:adapt-algo}, with $(s_1,s_2,s_3) = (y,z,x)$, $(t_1,t_2,t_3)=(z,y,x)$ and $\oplus=\text{`$>$'}$, our results indicate that:
\begin{align*}
    \Delta_3(y,z,x) - \alpha_3 &\geq -\frac{23}{1080}\\
    h_4(y,z,x) - h_4(z,y,x) &\geq -\frac{7}{324}\\
    \Rightarrow \Delta_4(y,z,x) - \alpha_4 
    &\geq -\frac{23}{1080} - \frac{7}{324} + \left(\frac{1}{2}\right)^4 \left(\frac{4}{5}\right) \\
    &= \frac{23}{3240} > 0\\
    \Rightarrow \Delta_4(y,z,x) 
    &\geq 
    \redbox{\frac{37}{648}}
    \approx 0.057099
\end{align*}

Since $\Delta_4(y,z,x) > \alpha_4$ for all subregions in $V$, the claim $f(y,x,z)>f(z,x,y)$ similarly follows.

Figure \ref{fig:plots} visualizes how the minimum value of $\dn(x,y,z)$ (a) and $\dn(y,z,x)$ (b) varies over $V$, under a fixed-sum assumption on $x, y, z$.
\begin{figure}[H]
    \centering
    \includegraphics[width=1.4\linewidth]{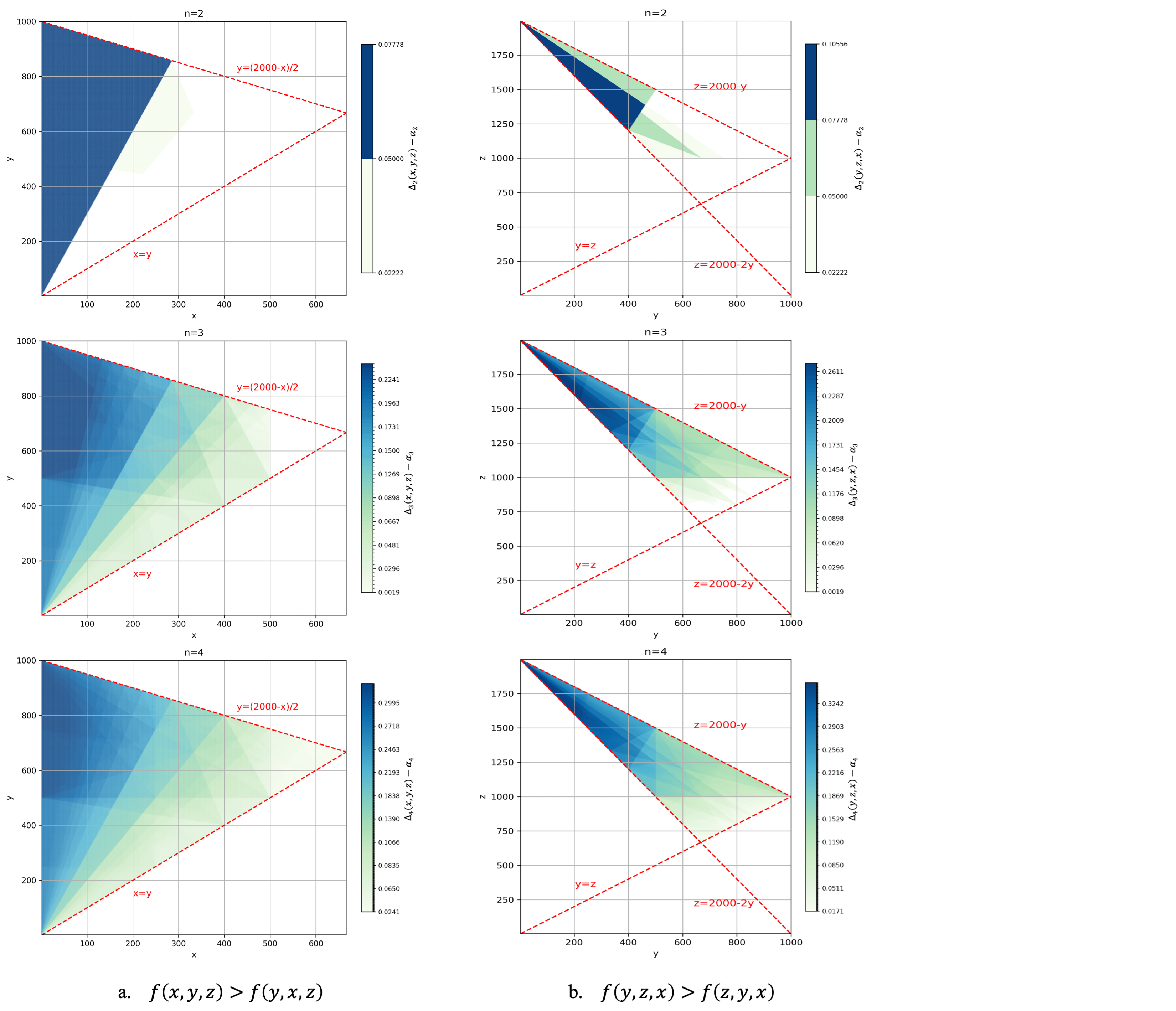}
    \caption{Visualization of ${\{ (x,y,z) : 0<x<y<z,\dspace x+y+z=2000,\dspace \Delta_n(\vect{s})>\alpha_n \}}$}
    {\begin{enumerate*}[a.\,]
        \item $\vect{s}=(x,y,z)$;\dspace\dspace\dspace
        \item $\vect{s}=(y,z,x)$
    \end{enumerate*}\\
    The colors indicate the values of $\Delta_n-\alpha_n$.}
    \label{fig:plots}
\end{figure}

\subsection{
%Conclusion and 
Future Work}
%\subsection{Summary}
%This paper is the first to systematically prove the seemingly intuitive result that, in the real-valued ``all in'' variant of the three-player Gambler's Ruin model, a player is less likely to lose if they swap their initial capital with another ``wealthier'' player. We accomplished this through a computer-assisted proof, beginning with the derivation of the general $\meshalgo$ algorithm, followed by a two-stage reduction to make $\meshalgo$ implementable. This involved transforming it first into a constraint optimization problem, and then into solvable mixed-integer linear program(s). Moreover, we described how $\meshalgo$ could be adapted to motivate proofs for other conjectures of a similar form to Theorem \ref{thm:want_to_swap}. This computer-aided framework underscores the significant potential of leveraging computational power in proving mathematical theorems.

%\subsection{Future work}
%There are several variations of Theorem \ref{thm:want_to_swap} that one could look at. We provide two directions below.
 
%\subsubsection{Points of inequality}
%Note that there exists $x<z$ such that 
%\begin{equation}
%    f(x,z,z) < f\left(\frac{x+z}{2}, \frac{x+z}{2}, z\right)
%\end{equation}

%Here is one possible result.
Angel and Holmes \cite{angel2024betting} also conjectured the following.
\begin{conjecture}
\label{conj:equal1}
For $x,y,z>0$ with $x<y$, 
\begin{equation}
f(x,y,z)>f(y,y,z).    
\end{equation}
\end{conjecture}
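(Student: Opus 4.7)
The plan is to apply the $\meshalgo$/MILP framework of Section \ref{sec:adapt-algo} with the substitutions $(s_1,s_2,s_3)=(x,y,z)$, $(t_1,t_2,t_3)=(y,y,z)$, $\oplus=\text{`$>$'}$, and
\[
V \;:=\; \{(x,y,z):0<x<y,\ z>0\}.
\]
Let $\tilde{\Delta}_n(x,y,z) := \sum_{j=1}^n h_j(x,y,z) - \sum_{j=1}^n h_j(y,y,z)$. The proof of Lemma \ref{lem:use_me} uses only that $f$ is the infinite sum of the $h_j$'s and that the unfinished tail is bounded by $\alpha_n:=(1/2)^n(4/5)$, so it applies verbatim, giving $f(x,y,z)-f(y,y,z)\ge \tilde{\Delta}_n-\alpha_n$. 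It therefore suffices to locate an $n$ for which $\tilde{\Delta}_n>\alpha_n$ on all of $V$. A uniform positive gap is plausible because $f$ is discontinuous across the slice $x=y$: from $(y,y,z)$ the $(1,2)$-pairing eliminates a player immediately, whereas from $(x,y,z)$ with $x<y$ the corresponding bet is only $x$ and no one is eliminated on that branch, producing a jump in $h_n$ for $n \ge 2$ that works in our favour. This is the same mechanism that powered the proof of Theorem \ref{thm:want_to_swap}.

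Concretely I would: (i) unroll $h_n(x,y,z)$ and $h_n(y,y,z)$ using \eqref{hrecursion}, exploiting the fact that three of the six first-step branches from $(y,y,z)$ vanish outright (they zero out a stack), so that the indicator set $\Hset_n^-$ is much sparser than $\Hset_n^+$; (ii) build MILP \eqref{milp:origLP} against the enlarged $V$; (iii) apply the $\epsilon$-scaling reduction of Section \ref{sec:second-reduction} to eliminate the strict inequalities; and (iv) use the split decomposition \eqref{eqn:break-up-milp} both to speed up the search and to extract an explicit numerical lower bound on $\tilde{\Delta}_n$ analogous to the $43/648$ of Lemma \ref{lem:swap_xy1}.

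The main obstacle I anticipate is a combination of combinatorial blow-up and a shrunken working margin. The present $V$ imposes no ordering of $z$ relative to $x$ or $y$, so many $h_n(x,y,z)$ branches that were pruned by $x<y<z$ in Section \ref{sec:results} now survive, multiplying the mesh-cell count; simultaneously, the contribution to $\tilde{\Delta}_n-\alpha_n$ on cells near $y=z$ or $x=z$ may be small, so the first admissible $n$ is likely to exceed the $n=4$ that sufficed for the main theorem. These two effects compound to push the MILP size well past that handled in Section \ref{sec:results}. I would therefore subdivide $V$ by the relative order of $z$ with $x$ and $y$ (for instance into $\{z\le x\}$, $\{x<z\le y\}$, $\{y<z\}$), run the divide-and-conquer scheme of Section \ref{sec:distr-opt} on each piece separately, and, if the MILP still fails on a thin sliver near the slice $x=y$, fall back to a direct coupling argument there, exploiting the symmetry of the initial state $(y,y,z)$ to couple the two games step by step.
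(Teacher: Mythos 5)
This statement is left as an open conjecture in the paper: the authors report that they attempted exactly the adaptation you describe (the substitutions of Section~\ref{sec:adapt-algo} with target $f(y,y,z)$, the MILP of \eqref{milp:origLP} with the $\epsilon$-scaling reduction of Section~\ref{sec:second-reduction}, and the decomposition \eqref{eqn:break-up-milp} run in parallel) and that the search ``ran beyond $n=6$'' without producing an $n$ for which $\tilde{\Delta}_n>\alpha_n$ holds on all of $V$. Your proposal is therefore not a proof but a plan, and it is essentially the plan the authors already executed unsuccessfully within their computational budget. The reduction step you cite is fine --- Lemma~\ref{lem:use_me} does apply verbatim, so it would suffice to exhibit a finite $n$ with $\tilde{\Delta}_n>\alpha_n$ uniformly on $\{0<x<y,\ z>0\}$ --- but everything after that is conditional on a computation whose outcome you do not supply, and which is known from Section~\ref{sec:results} not to terminate by $n=6$.

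The genuine gap is that nothing in your argument guarantees such an $n$ exists at all. \meshalgo\ terminates only if $\inf_V\bigl(\tilde{\Delta}_n-\alpha_n\bigr)>0$ for some finite $n$, which in turn requires the gap $f(x,y,z)-f(y,y,z)$ to be bounded away from zero uniformly over the enlarged region; here $V$ has no ordering constraint on $z$, so (after scale invariance) one must control degenerate limits such as $x\to y$, $x\to 0$, $z\to 0$ and $z\to\infty$, and your heuristic about the discontinuity across $x=y$ addresses only one of these and is not quantified. If the infimum of the gap is zero along any of these limits, no finite $n$ works and the method fails in principle, not just in practice. The proposed fallbacks --- subdividing $V$ by the order of $z$, and a ``direct coupling argument'' near the slice $x=y$ exploiting the symmetry of $(y,y,z)$ --- are not worked out; the coupling in particular is the only ingredient that could go beyond what the authors already tried, and as stated it is an unsubstantiated sketch. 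As it stands, the conjecture remains unproven by your proposal.
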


Our attempt to prove this conjecture using similar methods ran beyond $n=6$. For $n>6$, the program becomes quite slow due to the high branching factor (see Section \ref{sec:mip}). While our implementation already uses a degree of parallelism (over 8 cores), one could improve efficiency further by utilizing more computational resources, either by using a more powerful machine with additional processors, or through distributed parallel optimization across multiple machines \cite{bixby1995parallel,ladanyi2001branch}. We provided an initial direction for how this could be approached in Section \ref{sec:distr-opt}.

\subsubsection{Fixed sum}
A fixed sum $x+y+z$ could be of interest in resource allocation scenarios. This means $V$ would contain an extra constraint of the form, $x+y+z \oplus \ell$, where $ell \in \mathbb{R^+}$ is a constant and $\oplus \in \{ <,>,\leq,\geq \}$ is a comparison operator.

However, the second reduction described in Section \ref{sec:second-reduction} would no longer work for this case, since we would now have a constant term $ell$ present within the constraints of MILP \eqref{milp:origLP}. 
The exception is if $x,y,z \in \mathbb{Z^+}$: in the all-integer case, the program would still hold, as we can simply convert all strict inequalities in $x,y,z$ to an equivalent non-strict inequality (e.g., $x+y+z<\ell$ is equivalent to $x+y+z\leq \ell -1$).

%Here is one such result.
%\begin{theorem}
%\label{thm:equal1}
%For $z\ge 1/3$, and $0<x<y$ such that $x+y+z=1$, 
%\begin{align}
%    f(x,y,z)>f((1-z)/2,(1-z)/2,z)&=f((x+y)/2,(x+y)/2,z)\\
%    &>f(y,x,z).
%\end{align}
%\end{theorem}
%Let \[\Delta'_n(x,y,z):=\sum_{j=1}^n h_j(x,y,z)-\sum_{j=1}^n h_j((1-z)/2,(1-z)/2,z).\]
%Theorem \ref{thm:equal1} follows from the following two Lemmas, which we will verify using a computer-assisted proof.
%\begin{lemma}
%\label{lem:x<y1}
%For all $x,y,z>0$ with $z\ge 1/3$ and $0<x<y$ satisfying $x+y+z=1$, $\Delta'_6(x,y,z)\ge ?>(1/2)^6(4/5)$.
%\end{lemma}

%\begin{lemma}
%\label{lem:x<y2}
%For all $x,y,z>0$ with $z\ge 1/3$ and $0<x<y$ satisfying $x+y+z=1$, $-\Delta'_6(y,x,z)\ge ?>(1/2)^6(4/5)$.
%\end{lemma}

\newpage
\section*{Acknowledgement}
The authors thank Omer Angel, whose joint work with the second author laid the foundation for this project. 
%Without Prof. Angel's kind support for the opportunity to undertake this research, and his generous sharing of insights from their unpublished work, this project would not have been possible. 
%Throughout the project, he also posed insightful questions and advice, particularly on the computational aspects, which greatly enriched the depth and quality of our research.

\printbibliography

@unpublished{angel2024betting,
    author  = {Angel, Omer and Holmes, Mark},
    title   = {\text{‘All In' poker sequences}},
    year    = {Unpublished},
    note    = {unpublished}
}

@book{feller1991introduction,
  title={An introduction to probability theory and its applications, Volume 2},
  author={Feller, William},
  volume={81},
  year={1991},
  publisher={John Wiley \& Sons}
}

@article{diaconis2022gambler,
  title={Gambler’s Ruin and the ICM},
  author={\hypertarget{diaconis2022gambler}{}Diaconis, Persi and Ethier, Stewart N},
  journal={Statistical Science},
  volume={37},
  number={3},
  pages={289--305},
  year={2022},
  publisher={Institute of Mathematical Statistics}
}

@article{diaconis2021gambler,
  title={Gambler’s ruin estimates on finite inner uniform domains},
  author={Diaconis, Persi and Houston-Edwards, Kelsey and Saloff-Coste, Laurent},
  year={2021},
  journal={Annals of Applied Probability},
  pages={865--895},
  number={2},
  volume={31}
}

@book{bachelier1912calcul,
  title={Calcul des probabilit{\'e}s},
  author={Bachelier, Louis},
  volume={1},
  year={1912},
  publisher={Gauthier-Villars}
}

@misc{kemeny1966markov,
  title={Markov chains},
  author={Kemeny, John G and Snell, J Laurie and Knapp, A},
  year={1966},
  publisher={New York, NY: University Series in Higher Mathematics, Van Nostrand}
}

@article{mitchell2002branch,
  title={Branch-and-cut algorithms for combinatorial optimization problems},
  author={Mitchell, John E},
  journal={Handbook of applied optimization},
  volume={1},
  number={1},
  pages={65--77},
  year={2002},
  publisher={Oxford, UK}
}

@article{achterberg2007constraint,
  title={Constraint integer programming},
  author={Achterberg, Tobias},
  year={2007}
}

@article{land1960automatic,
  title={An automatic method of solving discrete programming problems. econometrica. v28},
  author={Land, AH and Doig, AG},
  year={1960}
}

@book{gomory2010outline,
  title={Outline of an algorithm for integer solutions to linear programs and an algorithm for the mixed integer problem},
  author={Gomory, Ralph E},
  year={2010},
  publisher={Springer}
}

@article{gomory1960solving,
  title={Solving linear programming problems in integers},
  author={Gomory, Ralph E},
  journal={Combinatorial analysis},
  volume={10},
  number={211-215},
  pages={25},
  year={1960},
  publisher={American Mathematical Society Providence, RI}
}

@article{gomory1963analog,
  title={An algorithm for integer solutions to linear programming},
  author={Gomory, Ralph E},
  journal={Recent Advances in Mathematical Programming},
  pages={269--302},
  year={1963},
  publisher={McGraw-Hill}
}

@article{padberg1991branch,
  title={A branch-and-cut algorithm for the resolution of large-scale symmetric traveling salesman problems},
  author={Padberg, Manfred and Rinaldi, Giovanni},
  journal={SIAM review},
  volume={33},
  number={1},
  pages={60--100},
  year={1991},
  publisher={SIAM}
}

@book{bertsimas1997introduction,
  title={Introduction to linear optimization},
  author={Bertsimas, Dimitris and Tsitsiklis, John N},
  volume={6},
  year={1997},
  publisher={Athena Scientific Belmont, MA}
}

@book{bazaraa2011linear,
  title={Linear programming and network flows},
  author={Bazaraa, Mokhtar S and Jarvis, John J and Sherali, Hanif D},
  year={2011},
  publisher={John Wiley \& Sons}
}

@inproceedings{dantzig1948programming,
  title={Programming in a linear structure},
  author={Dantzig, George B},
  booktitle={Bulletin of the American Mathematical Society},
  volume={54},
  number={11},
  pages={1074--1074},
  year={1948},
  organization={American Mathematical Society}
}

@article{soleimani2008modified,
  title={Modified big-M method to recognize the infeasibility of linear programming models},
  author={Soleimani-Damaneh, Majid},
  journal={Knowledge-Based Systems},
  volume={21},
  number={5},
  pages={377--382},
  year={2008},
  publisher={Elsevier}
}

@article{cococcioni2021big,
  title={The Big-M method with the numerical infinite M},
  author={Cococcioni, Marco and Fiaschi, Lorenzo},
  journal={Optimization Letters},
  volume={15},
  number={7},
  pages={2455--2468},
  year={2021},
  publisher={Springer}
}

@article{bixby1995parallel,
  title={Parallel mixed integer programming},
  author={Bixby, Robert E and Cook, William and Cox, A and Lee, Eva K},
  journal={Rice University Center for Research on Parallel Computation Research Monograph CRPC-TR95554},
  year={1995}
}

@article{ladanyi2001branch,
  title={Branch, cut, and price: Sequential and parallel},
  author={Lad{\'a}nyi, Laszlo and Ralphs, Ted K and Trotter, Leslie E},
  journal={Computational Combinatorial Optimization: Optimal or Provably Near-Optimal Solutions},
  pages={223--260},
  year={2001},
  publisher={Springer}
}

\newpage

\appendix
\section{Supplementary example: Computing $h_n$}
\label{sec:hrec-eg}
Using Equations \eqref{h1} and \eqref{hrecursion}, we could compute $h_2(x,y,z)$ as follows:
\begin{align*}
h_2(x,y,z) &= \frac16 \Big[h_{1}(x,2y,z-y)+h_{1}(2x,y-x,z)+h_{1}(2x,y,z-x) \Big]\\
&= \frac{1}{36}\Big[\indic{x\le 2y}+\indic{x\le z-y}+\indic{2x\le y-x}+\indic{2x\le z}+\indic{2x\le y}+\indic{2x\le z-x}\Big]
\end{align*}
Figure \ref{fig:hrec-eg} visualizes the recursion. Note that this is the reverse of the partial state diagram in Figure \ref{fig:dtmc}. The crossed out states are the nodes that we can prune early, under the constraint $V=\{(x,y,z):0<x<y<z\}$: e.g., $x-y<0$, so in state $(x-y, 2y, z)$, player 1 would have already been eliminated from the game, which makes its transition to $(x,y,z)$ impossible.
\begin{figure}[H]
    \centering
    \resizebox{1\textwidth}{!}{ \begin{tikzpicture}[
    every node/.style={draw, ellipse, minimum width=0.8cm, minimum height=0.4cm, font=\Large},  % Compact nodes with larger font
    every edge/.append style={->, -{Latex[length=2mm,width=3mm]}, thick, line width=1pt}
]

% Child nodes at the top
\node (1) at (-10,3) {\(2x,y-x,z\)};
\node (2) at (-6,3) {\(2x,y,z-x\)};
\node (3) at (-2,3) {\(x-y,2y,z\)};
\node (4) at (2,3) {\(x,2y,z-y\)};
\node (5) at (6,3) {\(x-z,y,2z\)};
\node (6) at (10,3) {\(x,y-z,2z\)};

% Main node (x,y,z) at the bottom
\node (xyz) at (0,0) {\(x,y,z\)};  % Increase font size for the main node

% Edges from nodes 1-6 to (x,y,z), with labels 1/6
\draw[->] (1) edge node[left, draw=none, below=0.1em] {\Large \(\frac{1}{6}\)} (xyz);
\draw[->] (2) edge node[above, draw=none, left=0.6em] {\Large \(\frac{1}{6}\)} (xyz);
\draw[->] (3) edge node[right, draw=none, above=1pt] {\Large \(\frac{1}{6}\)} (xyz);
\draw[->] (4) edge node[left, draw=none, above=1pt] {\Large \(\frac{1}{6}\)} (xyz);
\draw[->] (5) edge node[above, draw=none, right=0.6em] {\Large \(\frac{1}{6}\)} (xyz);
\draw[->] (6) edge node[left, draw=none, below=0.1em] {\Large \(\frac{1}{6}\)} (xyz);

% Red lines crossing out nodes 3, 5, and 6
\draw[red, thick] (-3.5, 3.8) -- (0.3, 2.5); % Node 3
\draw[red, thick] (4, 3.8) -- (7.8, 2.5); % Node 5
\draw[red, thick] (8, 3.8) -- (11.8, 2.5); % Node 6

\end{tikzpicture} }
    \caption{Parent states of $(x,y,z)$ in $h_n(x,y,z)$}
    \label{fig:hrec-eg}
\end{figure}
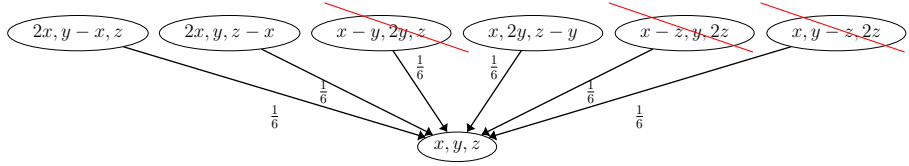

Similarly, we could compute $h_2(y,x,z)$ as follows:
\begin{align*}
h_{2}(y,x,z) &= \frac{1}{6}\Big[h_{1}(2y,x,z-y)+h_{1}(y-x,2x,z)+h_{1}(y,2x,z-x)\Big]\\
&= \frac{1}{36}\Big[\indic{2y\le x}+\indic{2y\le z-y}+\indic{y-x\le 2x}+\indic{y-x\le z}+\indic{y\le 2x}+\indic{y\le z-x}\Big]
\end{align*}

As introduced in Section \ref{sec:milp-repr}, we could represent $\Delta_2$ as $(c_2, \Hset_2)$, where:
\begin{align*}
    c_2 &= \frac{1}{36}\\
    \Hset_2^+ &= \{x \leq y, x \leq z-y, 2x \leq y-x, 2x \leq z, 2x \leq y, 2x \leq z-x\}\\
    \Hset_2^- &= \{2y \leq x, 2y\le z-y, y-x\le 2x, y-x\le z, y\le 2x, y\le z-x\}\\
    \Hset_2 &= \Hset_2^+ \cup \Hset_2^-
\end{align*}

\end{document}